\newtheorem{lemma}{Lemma}
\newtheorem{theorem}{Theorem}
\newtheorem{definition}{Definition}
\newtheorem{corollary}{Corollary}
\newtheorem{remark}{Remark}
\newtheorem{example}{Example}
\numberwithin{equation}{section}
\begin{document}

\title[BBM-Domains]{Bourgain-Brezis-Mironescu Domains}

\author{Kaushik Bal}

\email{kaushik@iitk.ac.in}

\author{Kaushik Mohanta}
\email{kmohanta@iitk.ac.in}
\author{Prosenjit Roy}

\email{prosenjit@iitk.ac.in}
\address{Indian Institute of Technology Kanpur, India}

\maketitle

\smallskip
\begin{abstract}
 Bourgain \textit{et al.}(2001) proved that for $p>1$ and smooth bounded domain $\Omega\subseteq\mathbb{R}^N$, \begin{equation*}
\lim\limits_{s\to1}(1-s)\iint \limits_{\Omega \times \Omega}\frac{\lvert f(x)-f(y) \rvert^p}{\lvert x-y \rvert^{N+sp}}dx dy=\kappa \int \limits_{\Omega}\lvert \nabla f(x) \rvert^p dx
\end{equation*} for all $f\in L^p(\Omega)$. This gives a characterization of $W^{1,p}(\Omega)$ by means of $W^{s,p}(\Omega)$ seminorms only. For the case $p=1$, D\'avila(2002) proved that when $\Omega$ is a bounded domain with Lipschitz boundary, \begin{equation*}
\lim\limits_{s\to1}(1-s)\iint \limits_{\Omega \times \Omega}\frac{\lvert f(x)-f(y) \rvert}{\lvert x-y \rvert^{N+s}}dx dy=\kappa [f]_{BV(\Omega)}
\end{equation*} for all $f\in L^1(\Omega)$. This characterizes $BV(\Omega)$ in terms of $W^{s,1}(\Omega)$ seminorm.
In this paper we extend the first result and partially extend the second result to extension domains.
\end{abstract}
\keywords{\textit{Keywords:} Fractional Sobolev spaces, Gagliardo seminorm, extension domains}.\smallskip

\subjclass{\textit{Subject Classification:} {26A33; 46E35; 49J52; 54C35.}
\date{}
\section{Introduction}
For an open set $\Omega\subseteq\mathbb{R}^N$, $0<s<1$ and $1\leq p <\infty$ the Sobolev spaces $W^{1,p}(\Omega)$ and $W^{s,p}(\Omega)$ are defined as:
\begin{align*}
W^{1,p}(\Omega)&=\left\{f\in L^p(\Omega) \ \big| \ \int \limits_{\Omega}\lvert \nabla f(x) \rvert^p dx <\infty\right\},\\
W^{s,p}(\Omega)&=\left\{f\in L^p(\Omega) \ \big| \ \iint \limits_{\Omega \times \Omega}\frac{\lvert f(x)-f(y) \rvert^p}{\lvert x-y \rvert^{N+sp}}dx dy<\infty\right\},
\end{align*}
where $\nabla f$ is understood in the weak sense. These spaces are equipped with the Gagliardo seminorms $[f]_{W^{1,p}(\Omega)}^\frac{1}{p}$, $[f]_{W^{s,p}(\Omega)}^\frac{1}{p}$ given by:
\begin{align*}
[f]_{W^{1,p}(\Omega)}:= \int \limits_{\Omega}\lvert \nabla f(x) \rvert^p dx,\
[f]_{W^{s,p}(\Omega)}:= \iint \limits_{\Omega \times \Omega}\frac{\lvert f(x)-f(y) \rvert^p}{\lvert x-y \rvert^{N+sp}}dx dy.
\end{align*}
For $f\in L^1(\Omega)$, define $$[f]_{BV(\Omega)}:=sup\{\int\limits_{\Omega}f\mbox{div}\phi \ | \ \phi\in C_c^1(\Omega,\mathbb{R}^N), \ \lvert\phi(x)\rvert \leq 1 \ \mbox{for} \ x\in\Omega \}$$
and  the space of bounded variations is defined as
$$BV(\Omega):=\{f\in L^1(\Omega) \ | \ [f]_{BV(\Omega)}<\infty \}.$$
It is well known that for $f\in W^{1,1}(\Omega)$, $[f]_{W^{1,1}(\Omega)}=[f]_{BV(\Omega)}$ (see \cite{giusti}).
\smallskip

Let $\Omega$ be a smooth bounded domain in $ \mathbb{R}^N$. In the celebrated work of Bourgain, Brezis and Mironescu  \cite{bbm}, it is proved that for a family of radial mollifiers $\{\rho_\varepsilon\}$,
\begin{equation}\label{brezis}
\lim\limits_{\varepsilon \to 0}\iint \limits_{\Omega \times \Omega}\frac{\lvert f(x)-f(y) \rvert^p}{\lvert x-y \rvert^p}\rho_\varepsilon(x-y)dxdy=\kappa\int\limits_{\Omega}\lvert \nabla f(x) \rvert^p dx
\end{equation}
for any $f\in L^p(\Omega)$, $1\leq p<\infty$ with \begin{equation}\label{kappa}
\kappa=\int\limits_{\omega\in S^{N-1}}\frac{\lvert \omega\cdot e \rvert^p}{\sigma_{N-1}} d\mathcal{L}^{N-1}
\end{equation}
where $\sigma_{N-1}=\mathcal{L}^{N-1}(S^{N-1})$. In \eqref{brezis} for $f\in L^p(\Omega)\setminus W^{1,p}(\Omega)$, the right hand side is understood to be $\infty$.
\smallskip
For the particular choice
\begin{equation}\label{rho}
   \rho_\varepsilon(x):=\frac{\varepsilon p}{\sigma_{N-1} R^{\varepsilon p} \lvert x \rvert^{N-\varepsilon p}} \chi_{[0,R)}{(\lvert x \rvert)},
   \end{equation}
where $R>0$ is such that $\Omega \subseteq B_R(0)$, one obtains \begin{equation}\label{seminorm-convergance}
\lim \limits_{s \rightarrow 1-}(1-s)[f]_{W^{s,p}(\Omega)}=\kappa\int \limits_{\Omega} \lvert \nabla f(x) \rvert^p dx.
\end{equation}
Bourgain \textit{et al.} in \cite{bbm} also studied the case $p=1$ separately and an easy consequence of their result is: for a smooth bounded domain $\Omega\subseteq\mathbb{R}^N$ and $f\in L^1(\Omega)$ if
\begin{equation}\label{p=1-brezis}
\liminf\limits_{s\to1-}(1-s)[f]_{W^{s,1}(\Omega)}<\infty
\end{equation}
then $f\in BV(\Omega)$. Later D\'avila \cite{davila} proved that for Lipschitz and bounded domain $\Omega$, and $f\in BV(\Omega)$
\begin{equation}\label{p=1-seminorm}
\lim \limits_{s \rightarrow 1-}(1-s)[f]_{W^{s,1}(\Omega)}=\kappa[f]_{BV(\Omega)}.
\end{equation}
Both \eqref{brezis} and \eqref{seminorm-convergance} provide  alternative characterizations of $W^{1,p}(\Omega)$ without using the concept of weak derivatives, whereas \eqref{p=1-brezis} gives a characterization of $BV(\Omega)$. Generalisation of \eqref{p=1-seminorm} to fractional Orlicz setting is addressed in \cite{bonder}.
\smallskip

A natural question that occurs at this point is whether the results of Bourgain \textit{et al.} \cite{bbm} and that of D\'avila \cite{davila} hold for general domains.
Consider the non-extension domain (see Definition \ref{dfext}) given by \begin{equation*}
\Omega=\left(\mathbb{R}^2 \setminus \{ (x_1,x_2) \ | \ x_1\leq 0 \   \mbox{and} \ \lvert x_2\rvert \leq \lvert x_1 \rvert^7 \}\right) \cap B_1(0).
\end{equation*}
Define $f(x):=r(x)\theta(x)$. Then $f\in W^{1,2}(\Omega)\setminus W^{s,2}(\Omega)$ for all $s>\frac{3}{4}$ which implies that the relation \eqref{seminorm-convergance} cannot hold for such a domain $\Omega$. For a detailed discussion on this example, we refer to Di Nezza \textit{et al.} \cite{hhg}. This suggests that the smoothness condition on the domain cannot be completely discarded. Another important observation is that although  \eqref{brezis} in Bourgain \textit{et al.} \cite{bbm} is stated for smooth bounded domains, the same proof goes through for bounded extension domains. This is because smoothness of the boundary was essentially used to get a $W^{1,p}(\mathbb{R}^N)$-extension of $W^{1,p}(\Omega)$ functions.
To the best of our knowledge, the only result that deals with unbounded domains is due to Brezis \cite{constant} where it is proved that \eqref{brezis} holds for $\Omega=\mathbb{R}^N$. At this stage we emphasise that for any unbounded domain $\Omega\subset\mathbb{R}^N$ there is no apparent choice of $\rho_\varepsilon$ which would imply \eqref{seminorm-convergance} from \eqref{brezis} (The natural choice of $\rho_\varepsilon$ as in \eqref{rho} with $R=\infty$ fails as it does not remain integrable).
\smallskip

An important work in this direction is by Leoni and Spector \cite{leoni} who proved that for an arbitrary open set $\Omega$ and $f\in W^{1,p}_{loc}(\Omega)$,
\begin{equation*}\label{eqleoni}
\lim\limits_{\lambda \to 0}\lim\limits_{\varepsilon \to 0}\int \limits_{\Omega_\lambda}\Big( \int \limits_{\Omega_\lambda} \left(\frac{\lvert f(x)-f(y) \rvert^p}{\lvert x-y \rvert^p}\right)^q\rho_\varepsilon(x-y)dx\Big)^\frac{1}{q}dy=\kappa\int\limits_{\Omega}\lvert \nabla f(x) \rvert^p dx
\end{equation*} holds where \begin{equation}\label{omegalambda}
\Omega_\lambda:=\{ x \in \Omega \ \vert \  \mbox{dist}(x,\partial \Omega)>\lambda, \lvert x \rvert \lambda< 1 \}.
\end{equation}
Ponce \cite{ponce} showed that for a bounded smooth domain $\Omega$, a continuous function $\omega:[0,\infty)\to[0,\infty)$ and for $f\in L^{p}(\Omega)$
\begin{equation*}
\liminf\limits_{\varepsilon \to 0}\iint \limits_{\Omega \times \Omega}\omega\left(\frac{\lvert f(x)-f(y) \rvert}{\lvert x-y \rvert}\right)\rho_\varepsilon(x-y)dxdy<\infty
\end{equation*}
characterises $W^{1,p}(\Omega)$.
Ferreira \textit{et al.} \cite{ferreira-ana-hasto} studied a similar limit for generalised Orlicz spaces.
One may also refer to \cite{hasto-ana,nguyen,nguyen1} and the references therein for more information about the topic.
\smallskip
As mentioned earlier, we know that \eqref{brezis}, \eqref{seminorm-convergance}, \eqref{p=1-brezis} and \eqref{p=1-seminorm} hold for bounded extension domains. In this work we drop the boundedness condition of the domain and still recover \eqref{seminorm-convergance}, \eqref{p=1-brezis} and partially recover \eqref{p=1-seminorm}. In particular, the following two theorems are our main results:
\begin{theorem}\label{main-combined}
Let $1<p<\infty$, $\Omega$ be any extension domain and $f\in L^p(\Omega)$. Then
\begin{equation*}
\lim \limits_{s \rightarrow 1-}(1-s)[f]_{W^{s,p}(\Omega)}=\kappa[f]_{W^{1,p}(\Omega)},
\end{equation*}
where we use the convention that $\int \limits_{\Omega} \lvert \nabla f(x) \rvert^p dx=\infty$ when $f\notin W^{1,p}(\Omega)$ and $\kappa$ is as defined in \eqref{kappa}.
\end{theorem}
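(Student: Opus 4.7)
The plan is to establish the two separate bounds
$$\liminf_{s\to 1^-}(1-s)[f]_{W^{s,p}(\Omega)} \ge \kappa\int_\Omega|\nabla f|^p\,dx, \quad \limsup_{s\to 1^-}(1-s)[f]_{W^{s,p}(\Omega)} \le \kappa\int_\Omega|\nabla f|^p\,dx,$$
using the stated $+\infty$ convention on the right-hand side when $f\notin W^{1,p}(\Omega)$. Crucially, the extension hypothesis is needed only for the upper bound.

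For the lower bound, I would exhaust $\Omega$ by an increasing sequence of bounded smooth open sets $U_n\subset\subset\Omega$ with $\bigcup_n U_n=\Omega$. Monotonicity of the Gagliardo seminorm gives $[f]_{W^{s,p}(U_n)}\le[f]_{W^{s,p}(\Omega)}$, and since each $U_n$ is bounded and smooth the classical BBM formula \eqref{seminorm-convergance}, together with its converse from \cite{bbm} (finiteness of $\liminf_{s\to 1^-}(1-s)[f]_{W^{s,p}(U_n)}$ forces $f|_{U_n}\in W^{1,p}(U_n)$), applies to yield
$$\lim_{s\to 1^-}(1-s)[f]_{W^{s,p}(U_n)}=\kappa\int_{U_n}|\nabla f|^p\,dx,$$
with the right-hand side read as $+\infty$ if $f|_{U_n}\notin W^{1,p}(U_n)$. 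Taking $n\to\infty$ and using monotone convergence gives $\liminf_{s\to 1^-}(1-s)[f]_{W^{s,p}(\Omega)}\ge\kappa\int_\Omega|\nabla f|^p\,dx$; in particular this settles the divergent case $f\notin W^{1,p}(\Omega)$.

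For the upper bound, the crux of the argument, assume $f\in W^{1,p}(\Omega)$ and pick a $W^{1,p}$-extension $\tilde f\in W^{1,p}(\mathbb{R}^N)$ (this is the one and only place the extension property is used). Fix any $R>0$ and split $[f]_{W^{s,p}(\Omega)}=A_s+B_s$ according to $|x-y|<R$ or $|x-y|\ge R$. The inequality $|f(x)-f(y)|^p\le 2^{p-1}(|f(x)|^p+|f(y)|^p)$ and Fubini give $(1-s)B_s \le C(1-s)R^{-sp}\|f\|_{L^p(\Omega)}^p\to 0$. For $A_s$, since $f=\tilde f$ on $\Omega$, enlarging the inner $y$-integration from $\Omega$ to $\mathbb{R}^N$ yields $A_s\le\int_\Omega G_s(x)\,dx$ with
$$G_s(x):=\int_{|x-y|<R}\frac{|\tilde f(x)-\tilde f(y)|^p}{|x-y|^{N+sp}}\,dy.$$
Brezis's extension of BBM to $\mathbb{R}^N$ \cite{constant}, combined with the same vanishing-tail observation applied to $\mathbb{R}^N$, gives $(1-s)\int_{\mathbb{R}^N}G_s\,dx\to\kappa\int_{\mathbb{R}^N}|\nabla\tilde f|^p\,dx$, while the pointwise BBM asymptotic at Lebesgue points of $|\nabla\tilde f|^p$ yields $(1-s)G_s(x)\to\kappa|\nabla\tilde f(x)|^p$ a.e. Since $(1-s)G_s\ge 0$ and $|\nabla\tilde f|^p\in L^1(\mathbb{R}^N)$, Scheff\'e's lemma then upgrades these two facts to strong convergence $(1-s)G_s\to\kappa|\nabla\tilde f|^p$ in $L^1(\mathbb{R}^N)$; integrating over $\Omega$ gives $\lim_s(1-s)\int_\Omega G_s\,dx=\kappa\int_\Omega|\nabla\tilde f|^p\,dx=\kappa\int_\Omega|\nabla f|^p\,dx$, completing the upper bound.

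The hard part is exactly this last localization. The crude extension bound $[f]_{W^{s,p}(\Omega)}\le[\tilde f]_{W^{s,p}(\mathbb{R}^N)}$ would produce only $\kappa\int_{\mathbb{R}^N}|\nabla\tilde f|^p$ on the right, which is strictly too large when $\tilde f$ has mass outside $\Omega$. Scheff\'e's lemma is what sharpens this $\mathbb{R}^N$-level integral down to the intrinsic quantity $\kappa\int_\Omega|\nabla f|^p$; the radius cutoff $R$ plays the role that the $R$ in the mollifier \eqref{rho} plays in the bounded case, but here $R$ is kept fixed and the $(1-s)$ prefactor is what kills the $|x-y|\ge R$ tail, which is precisely the step that bypasses the unavailability of a good $\rho_\varepsilon$ for unbounded $\Omega$.
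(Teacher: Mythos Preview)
Your argument is correct and takes a genuinely different route from the paper. The lower bound (and with it the divergent case $f\notin W^{1,p}(\Omega)$) coincides with the paper's proof of Theorem~\ref{converse}: exhaust $\Omega$ from inside by smooth bounded $U_n$, apply classical BBM and its converse on each $U_n$, and let $n\to\infty$ by monotone convergence. The upper bound is where the two proofs diverge. The paper first reduces to $f\in\mathcal{D}(\Omega)$ by density (Lemma~\ref{lmdenseenough}, relying on Lemma~\ref{lmdom}), and then approximates $\Omega$ \emph{from outside} by a decreasing sequence $\Omega_n=\mathbb{R}^N\setminus\overline{U_n}$ with $U_n$ smooth and bounded; each $\Omega_n$ is shown to be a BBM-domain via a complement lemma (Lemma~\ref{lmcomplement}) built on a vanishing cross-term estimate for disjoint sets (Lemma~\ref{lmdisjoint}), and a monotone squeeze (Lemma~\ref{seq}) finishes. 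You instead stay with the given $f\in W^{1,p}(\Omega)$, extend once to $\tilde f\in W^{1,p}(\mathbb{R}^N)$, and localize the $\mathbb{R}^N$-level BBM limit back to $\Omega$ by combining the a.e.\ pointwise asymptotic $(1-s)G_s\to\kappa|\nabla\tilde f|^p$ with convergence of the $\mathbb{R}^N$-integrals and Scheff\'e's lemma. This is more direct and sidesteps the outer-approximation and cross-term machinery entirely; the paper's approach, in exchange, yields the full identity for every $f\in W^{1,p}(\Omega)$ (including $p=1$, cf.\ Theorem~\ref{main}) from the compactly supported smooth case without ever invoking pointwise differentiability of Sobolev functions. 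One small correction: the a.e.\ pointwise convergence you need holds at $L^p$-differentiability points of $\tilde f$ (which are a.e.\ by Calder\'on--Zygmund), not merely at Lebesgue points of $|\nabla\tilde f|^p$; the latter condition alone does not suffice, though the conclusion---a.e.\ convergence---is the same.
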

\begin{theorem}\label{p=1-main-combined}
Let $\Omega$ be any extension domain and $f\in W^{1,1}(\Omega)$. Then
\begin{equation*}
\lim \limits_{s \rightarrow 1-}(1-s)[f]_{W^{s,p}(\Omega)}=\kappa[f]_{BV(\Omega)}
\end{equation*} where $\kappa=\kappa(N,1)$ is as defined in \eqref{kappa}.
Moreover, if $f\in L^1(\Omega)$ satisfies
\begin{equation*}
\liminf \limits_{s \rightarrow 1-}(1-s)[f]_{W^{s,p}(\Omega)}<\infty
\end{equation*} then $f\in BV(\Omega)$.
\end{theorem}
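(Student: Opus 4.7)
The plan is to establish the upper bound
\begin{equation*}
\limsup_{s\to1-}(1-s)[f]_{W^{s,1}(\Omega)}\le\kappa\,\|\nabla f\|_{L^1(\Omega)}
\end{equation*}
under the hypothesis $f\in W^{1,1}(\Omega)$, and the lower bound
\begin{equation*}
\kappa\,[f]_{BV(\Omega)}\le\liminf_{s\to1-}(1-s)[f]_{W^{s,1}(\Omega)}
\end{equation*}
for \emph{any} $f\in L^1(\Omega)$. Combined with the identity $[f]_{W^{1,1}(\Omega)}=[f]_{BV(\Omega)}$ for $W^{1,1}$-functions, these two estimates yield both the equality in the main claim and the converse implication in the ``moreover'' clause.

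For the upper bound, fix an extension $\tilde f=Ef\in W^{1,1}(\mathbb{R}^N)$. A direct use of $[f]_{W^{s,1}(\Omega)}\le[\tilde f]_{W^{s,1}(\mathbb{R}^N)}$ with Brezis's $\mathbb{R}^N$-BBM only gives the constant $\kappa\|\nabla\tilde f\|_{L^1(\mathbb{R}^N)}$, which is in general strictly larger than the target $\kappa\|\nabla f\|_{L^1(\Omega)}$. To recover sharpness we truncate: for $R>0$ set $\Omega_R=\Omega\cap B_R(0)$ and decompose the double integral defining $[f]_{W^{s,1}(\Omega)}$ into the diagonal block $\iint_{\Omega_R\times\Omega_R}$, the off-diagonal block $\iint_{\Omega_R\times(\Omega\setminus B_R)}$ together with its symmetric twin, and the outer block $\iint_{(\Omega\setminus B_R)\times(\Omega\setminus B_R)}$, all carrying the common integrand $|f(x)-f(y)|/|x-y|^{N+s}$. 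Because $f|_{\Omega_R}$ admits a $W^{1,1}(\mathbb{R}^N)$ extension (take $\tilde f$ multiplied by a smooth cutoff equal to $1$ on $B_R$), the diagonal block is amenable to the bounded-domain version of \eqref{p=1-seminorm}, so its $(1-s)$-scaling converges to $\kappa\|\nabla f\|_{L^1(\Omega_R)}$ as $s\to1-$. The two remaining blocks, dominated after the restriction inequality by the corresponding pieces of $[\tilde f]_{W^{s,1}(\mathbb{R}^N)}$ supported on pairs with at least one coordinate outside $B_R$, are controlled via Brezis's $\mathbb{R}^N$-BBM applied to $\tilde f$: up to dominated error their $(1-s)$-scaling converges to a quantity bounded by $2\kappa\int_{\mathbb{R}^N\setminus B_R}|\nabla\tilde f|$, which vanishes as $R\to\infty$ because $|\nabla\tilde f|\in L^1(\mathbb{R}^N)$. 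Sending $s\to1-$ first and then $R\to\infty$ closes the bound.

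For the lower bound, fix $\phi\in C_c^1(\Omega,\mathbb{R}^N)$ with $\|\phi\|_\infty\le 1$, put $K=\mathrm{supp}\,\phi$ and $\delta=\mathrm{dist}(K,\partial\Omega)>0$. Polar coordinates with restriction to $r\in(0,\delta)$ (so that $x+r\omega\in\Omega$ whenever $x\in K$) give
\begin{equation*}
(1-s)[f]_{W^{s,1}(\Omega)}\ge(1-s)\int_{S^{N-1}}\!\int_0^\delta r^{-s}\Bigl(\tfrac{1}{r}\!\int_K|f(x+r\omega)-f(x)|\,dx\Bigr)dr\,d\sigma(\omega).
\end{equation*}
The distributional identity
\begin{equation*}
\int_\Omega f(x)\,\omega\cdot\nabla\psi(x)\,dx=-\lim_{r\to 0+}\int_\Omega\frac{f(x+r\omega)-f(x)}{r}\,\psi(x)\,dx,
\end{equation*}
valid for $\psi\in C_c^1(\Omega)$, together with the spherical representation of $\mathrm{div}\,\phi$ as an $\omega$-average of the directional derivatives $\omega\cdot\nabla(\omega\cdot\phi)$ and the bound $\|\omega\cdot\phi\|_\infty\le 1$, produces, after averaging in $\omega$ and applying the elementary identity $(1-s)\int_0^\delta r^{-s}\,dr=\delta^{1-s}\to 1$ together with Fatou's lemma, the inequality $\kappa\int_\Omega f\,\mathrm{div}\,\phi\,dx\le\liminf_{s\to1-}(1-s)[f]_{W^{s,1}(\Omega)}$. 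Taking the supremum over admissible $\phi$ concludes the lower bound.

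The principal obstacle is the upper bound: a naive extension is too lossy, and the truncation to $\Omega_R$ introduces genuinely non-local interactions across $\partial B_R$ in the fractional seminorm. Controlling these off-diagonal shell terms \emph{uniformly in $s$} is the delicate analytical step and relies essentially on Brezis's unbounded-domain BBM on $\mathbb{R}^N$ applied to $\tilde f$, together with the vanishing of $|\nabla\tilde f|$-mass at infinity. The lower bound is comparatively standard once the spherical representation of $\mathrm{div}\,\phi$ is organized correctly.
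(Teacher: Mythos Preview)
Your approach differs substantially from the paper's. The paper never separates the problem into matched upper and lower bounds. For the first assertion it invokes Theorem~\ref{main} (every extension domain is a BBM-domain for all $1\le p<\infty$), whose proof reduces by density (Lemma~\ref{lmdenseenough}, using Lemma~\ref{lmdom}) to $f\in\mathcal{D}(\Omega)$ and then approximates $\Omega$ from \emph{outside} by a decreasing sequence of BBM-domains $\Omega_n=\mathbb{R}^N\setminus\overline{U}_n$ built as complements of smooth bounded sets (Lemmas~\ref{thRN}, \ref{lmcomplement}, \ref{seq}). The ``moreover'' clause is Theorem~\ref{p=1-converse}, obtained by inner approximation $\Omega_n\nearrow\Omega$ with smooth bounded $\Omega_n$, D\'avila's result on each $\Omega_n$, and monotone convergence of $[f]_{BV(\Omega_n)}$. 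Your duality argument for the lower bound is a legitimate and more self-contained alternative to this last step, since it bypasses D\'avila entirely.

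The upper bound, however, has a genuine gap. You invoke the bounded-domain limit \eqref{p=1-seminorm} on $\Omega_R=\Omega\cap B_R(0)$ to conclude that the diagonal block satisfies $(1-s)[f]_{W^{s,1}(\Omega_R)}\to\kappa\|\nabla f\|_{L^1(\Omega_R)}$. But $\Omega_R$ is in general neither Lipschitz nor an extension domain: if $\partial\Omega$ meets $\partial B_R$ tangentially, $\Omega_R$ can develop cusps. Your observation that the particular function $f|_{\Omega_R}$ admits a $W^{1,1}(\mathbb{R}^N)$ extension is not the hypothesis of D\'avila's theorem, which is a statement about the domain; and the density mechanism behind BBM/D\'avila needs $\mathcal{D}(\Omega_R)$ dense in $W^{1,1}(\Omega_R)$, which can fail for such $\Omega_R$. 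This is precisely why the paper first passes to $C_c^\infty$ functions via density on the \emph{original} extension domain $\Omega$ (where Lemma~\ref{lmdom} is available) and then handles the geometry by outer rather than inner approximation: for smooth $f$ all the estimates become pointwise Lipschitz/Taylor bounds and the shape of intermediate sets never enters. Your truncation scheme could be repaired along the same lines---reduce to $f\in\mathcal{D}(\Omega)$ first, then note that for $C_c^\infty$ functions the sharp $\limsup$ bound holds on any bounded open set by direct Taylor expansion---but as written the appeal to \eqref{p=1-seminorm} on $\Omega_R$ is unjustified.
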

\smallskip

The paper is arranged in the following way: In Section 2, we present various notations, definitions and well known results that shall be used for the proof of our main results, that is Theorem \ref{main-combined} and \ref{p=1-main-combined}. In Section 3, we prove several lemmas that are of independent interest. Then as a consequence of this lemmas, we prove Theorem \ref{main}, \ref{converse} and \ref{p=1-converse}. Theorem \ref{main-combined} follows from Theorem \ref{main} and \ref{converse}. Theorem \ref{p=1-main-combined} follows from Theorem \ref{main} and \ref{p=1-converse}.

\section{ Preliminaries and Definitions}
We shall assume the following notations until otherwise mentioned:
\begin{itemize}
\item  $1\leq p <\infty$, $s \in (0,1)$ and $\Omega \subseteq \mathbb{R}^N$ is an open and connected set,
\item  
$C_c^\infty(\Omega)  := \{    f:\Omega\to\mathbb{R}  \ |  \  f \ \textrm{is infinitely differentiable and has compact support} \}, $
\item $\mathcal{D}(\Omega):=\left\{f{\big|}_\Omega \ | \  f \in C_c^\infty(\mathbb{R}^N)\right\}.$
\item $W^{1,p}(\Omega)$ and $W^{s,p}(\Omega)$ shall denote the Sobolev space (see Brezis \cite{b}) and fractional Sobolev space (see Di Nezza \textit{et al.}\cite{hhg}) respectively,
\item $\kappa$  will be as in \eqref{kappa},
\item  $\mathcal{L}^k$ will denote the $k$-dimensional Lebesgue measure,
\item $C>0$ will denote a generic constant that may differ from line to line but is always independent of $s.$
\end{itemize}
\smallskip

Now we introduce some basic definitions and assumptions that shall be required for formulating our main result.
\begin{definition}\emph{\textbf{[Extension Domain]}}\label{dfext}
An open set $\Omega \subseteq \mathbb{R}^N$ is called an extension domain if there exists a constant $C=C(\Omega,N,p)$ such that for any $f\in W^{1,p}(\Omega)$ there exist $\tilde{f}\in W^{1,p}(\mathbb{R}^N)$ satisfying $\tilde{f}\Big{|}_\Omega=f$ and $\lVert \tilde{f} \rVert_{W^{1,p}(\mathbb{R}^N)} \leq C \lVert f \rVert_{W^{1,p}(\Omega)} $.
\end{definition}
\begin{example}
Any domain with bounded Lipschitz boundary is an extension domain(see \cite{adams}).
\end{example}
\begin{definition}\emph{[\textbf{BBM-Property}]}
For $1\leq p \leq \infty$ we say that $f \in W^{1,p}(\Omega)$ satisfy BBM-Property if
$f$ satisfies \eqref{seminorm-convergance}.
\end{definition}

\begin{definition}\emph{[\textbf{BBM-Domain}]}
For $1\leq p \leq \infty$ we say that $\Omega$ is a BBM-domain if any $f \in W^{1,p}(\Omega)$ satisfies the BBM-Property.
\end{definition}
\begin{example}\label{smooth-bounded-bbm}
For $1\leq p < \infty$ every smooth bounded domain in $\mathbb{R}^N$ is a BBM-domain (see Bourgain \textit{et al.} \cite{bbm}).
\end{example}
Now we present some well known results on Sobolev spaces which will be useful for the proof of our main result.
\begin{lemma}\label{lmdensity}
Let $\Omega$ be an extension domain, then $\mathcal{D}(\Omega)$ is dense in $W^{1,p}(\Omega)$.
\end{lemma}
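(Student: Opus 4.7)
The plan is to leverage the extension property to reduce the density question on $\Omega$ to the classical density of $C_c^\infty(\mathbb{R}^N)$ in $W^{1,p}(\mathbb{R}^N)$. Specifically, given $f \in W^{1,p}(\Omega)$, I first invoke Definition \ref{dfext} to obtain an extension $\tilde f \in W^{1,p}(\mathbb{R}^N)$ with $\tilde f|_\Omega = f$ and $\|\tilde f\|_{W^{1,p}(\mathbb{R}^N)} \le C \|f\|_{W^{1,p}(\Omega)}$.

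Next, I would approximate $\tilde f$ in $W^{1,p}(\mathbb{R}^N)$ by a sequence $\{g_n\} \subset C_c^\infty(\mathbb{R}^N)$. This is the standard density statement on the whole space, proved by truncating $\tilde f$ with a smooth cut-off $\eta_R(x) = \eta(x/R)$ (so the truncation converges to $\tilde f$ in $W^{1,p}$ as $R \to \infty$ by dominated convergence applied to both $\eta_R \tilde f$ and $\nabla(\eta_R \tilde f) = \eta_R \nabla \tilde f + R^{-1}(\nabla\eta)(x/R)\tilde f$) and then mollifying the compactly supported truncation with a standard mollifier $\rho_\varepsilon$. This gives $g_n \in C_c^\infty(\mathbb{R}^N)$ with $\|g_n - \tilde f\|_{W^{1,p}(\mathbb{R}^N)} \to 0$.

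Finally, restriction to $\Omega$ is a bounded operator from $W^{1,p}(\mathbb{R}^N)$ to $W^{1,p}(\Omega)$ with norm at most $1$, so
\begin{equation*}
\|g_n|_\Omega - f\|_{W^{1,p}(\Omega)} \le \|g_n - \tilde f\|_{W^{1,p}(\mathbb{R}^N)} \to 0.
\end{equation*}
Since each $g_n|_\Omega$ lies in $\mathcal{D}(\Omega)$ by its very definition, this exhibits $f$ as a $W^{1,p}(\Omega)$-limit of elements of $\mathcal{D}(\Omega)$, completing the proof.

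There is no real obstacle here: the work is entirely front-loaded into the hypothesis that $\Omega$ is an extension domain, which converts the problem into the well-known density result on $\mathbb{R}^N$. The only point worth verifying carefully is that the classical cut-off/mollify argument indeed produces functions in $C_c^\infty(\mathbb{R}^N)$ (not merely in $C^\infty \cap W^{1,p}$), which it does because convolution of a compactly supported $L^p$ function with a compactly supported mollifier has compact support.
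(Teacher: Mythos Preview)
Your argument is correct and is the standard proof of this fact. Note, however, that the paper itself states Lemma~\ref{lmdensity} without proof in the Preliminaries section, treating it as a known result (indeed it follows immediately from the density of $C_c^\infty(\mathbb{R}^N)$ in $W^{1,p}(\mathbb{R}^N)$, for which the paper cites Brezis~\cite{b}); so there is no proof in the paper to compare against, and your write-up simply supplies the routine details the authors chose to omit.
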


\begin{lemma}\emph{[Proposition 9.3, Brezis \cite{b}]}\label{brezis-book} Let $\Omega$ be an extension domain in $\mathbb{R}^N$. For any $f$ in $W^{1,p}(\Omega)$ denote the extension of $f$ to $\mathbb{R}^N$ by $\tilde{f} \in W^{1,p}(\mathbb{R}^N)$. Then there exists a constant $C=C(N,p,\Omega)$ such that for any $f$ in $W^{1,p}(\Omega)$ and for any $h$, sufficiently near $0$, in $\mathbb{R}^N$, $$\int \limits_{\mathbb{R}^N} \lvert \tilde{f}(x+h)-\tilde{f}(x) \rvert^pdx \leq   C(N,p,\Omega) \lvert h \rvert ^p [f]_{W^{1,p}(\Omega)}.$$

\end{lemma}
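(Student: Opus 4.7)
The plan is to reduce the claim to the classical translation estimate on $\mathbb{R}^N$ applied to the extension $\tilde f$, and then use Definition \ref{dfext} to dominate the right-hand side by a quantity involving only $f$. First I would establish the auxiliary inequality
\begin{equation*}
\int_{\mathbb{R}^N}|u(x+h)-u(x)|^p\,dx\le |h|^p\int_{\mathbb{R}^N}|\nabla u(y)|^p\,dy
\end{equation*}
for every $u\in W^{1,p}(\mathbb{R}^N)$ and every $h\in\mathbb{R}^N$. For $u\in C_c^\infty(\mathbb{R}^N)$ the fundamental theorem of calculus gives $u(x+h)-u(x)=\int_0^1 h\cdot\nabla u(x+th)\,dt$; Jensen's inequality against the probability measure $dt$ on $[0,1]$ then yields the pointwise bound $|u(x+h)-u(x)|^p\le |h|^p\int_0^1|\nabla u(x+th)|^p\,dt$, and integrating in $x$ and applying Fubini together with the translation invariance of Lebesgue measure gives the displayed estimate. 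The general case $u\in W^{1,p}(\mathbb{R}^N)$ is obtained by density of $C_c^\infty(\mathbb{R}^N)$.

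Next I would apply the auxiliary inequality to $u=\tilde f\in W^{1,p}(\mathbb{R}^N)$, which immediately yields
\begin{equation*}
\int_{\mathbb{R}^N}|\tilde f(x+h)-\tilde f(x)|^p\,dx\le |h|^p\int_{\mathbb{R}^N}|\nabla\tilde f(y)|^p\,dy.
\end{equation*}
It then remains to estimate $\int_{\mathbb{R}^N}|\nabla\tilde f|^p$ in terms of data on $\Omega$. This is exactly what the extension-domain hypothesis provides: by Definition \ref{dfext}, the extension $\tilde f$ can be chosen so that $\|\tilde f\|_{W^{1,p}(\mathbb{R}^N)}\le C(N,p,\Omega)\|f\|_{W^{1,p}(\Omega)}$, and in particular $\int_{\mathbb{R}^N}|\nabla\tilde f|^p\le C(N,p,\Omega)^p\|f\|_{W^{1,p}(\Omega)}^p$. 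Chaining the two inequalities gives the desired bound.

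There is essentially no genuine obstacle in this argument; the smoothness of $\partial\Omega$ plays no role, and only the existence of a bounded linear extension operator is needed, which is precisely the hypothesis. The one cosmetic point is that a generic extension operator controls the full Sobolev norm rather than only the gradient seminorm, so the cleanest natural form of the conclusion has $\|f\|_{W^{1,p}(\Omega)}^p$ in place of $[f]_{W^{1,p}(\Omega)}$ on the right; this is harmless for the later applications, where $f$ is fixed in $W^{1,p}(\Omega)$ and $h$ is sent to $0$.
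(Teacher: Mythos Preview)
The paper does not supply its own proof of this lemma; it simply quotes it as Proposition~9.3 from Brezis~\cite{b}. Your argument is the standard one (fundamental theorem of calculus plus Jensen for smooth functions, density, then the extension bound from Definition~\ref{dfext}) and is correct. Your closing remark is also accurate: a generic extension operator only controls $\lVert \tilde f\rVert_{W^{1,p}(\mathbb{R}^N)}$ by $\lVert f\rVert_{W^{1,p}(\Omega)}$, so the natural conclusion carries $\lVert f\rVert_{W^{1,p}(\Omega)}^p$ rather than $[f]_{W^{1,p}(\Omega)}$ on the right; this is indeed immaterial for the only place the lemma is invoked (the proof of Lemma~\ref{lmdom}, whose conclusion is stated in terms of $\lVert f\rVert_{W^{1,p}(\Omega)}$ anyway).
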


The following lemma is slightly modified version of  Proposition 2.2 of Di Nezza \textit{et al.} \cite{hhg} whose proof is presented here for the sake of completeness.
\begin{lemma}\label{lmdom}
Let $1\leq p <\infty$ and $\Omega$ be an extension domain in $\mathbb{R}^N$. Then there is a constant $C=C(N,p,\Omega)$ such that for any $f$ in $W^{1,p}(\Omega)$ and for $\frac{1}{2} \leq s < 1$,
$$ (1-s)[f]_{W^{s,p}(\Omega)} \leq C(N,p,\Omega) \lVert f \rVert_{W^{1,p}(\Omega)}.$$

\end{lemma}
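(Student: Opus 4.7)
The plan is to change variables from $(x,y)$ to $(x,h)$ where $h = y-x$, extend the function to $\mathbb{R}^N$ via the extension operator, and split the resulting integral over $h$ into a near-zero piece and a far-from-zero piece.

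Writing $[f]_{W^{s,p}(\Omega)} = \iint_{\Omega \times \Omega} \frac{|f(x)-f(y)|^p}{|x-y|^{N+sp}}\,dx\,dy$, I would extend $f$ to $\tilde f \in W^{1,p}(\mathbb{R}^N)$ (using that $\Omega$ is an extension domain) and bound the seminorm by the full-space integral with $\tilde f$. After the substitution $y = x + h$ this becomes $\int_{\mathbb{R}^N} |h|^{-N-sp} \bigl( \int_{\mathbb{R}^N} |\tilde f(x+h)-\tilde f(x)|^p\,dx \bigr)\,dh$. I then split on a fixed scale, say $|h| < 1$ versus $|h| \geq 1$; the first region is where the Sobolev regularity is doing the work, and the second region is where we simply fall back on the $L^p$ norm.

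For the near-zero piece, I would invoke Lemma \ref{brezis-book}, which gives $\int_{\mathbb{R}^N} |\tilde f(x+h)-\tilde f(x)|^p\,dx \leq C|h|^p [f]_{W^{1,p}(\Omega)}$ for $|h|$ small. Feeding this in yields the bound $C[f]_{W^{1,p}(\Omega)} \int_{|h|<1} |h|^{p-N-sp}\,dh = C[f]_{W^{1,p}(\Omega)} \cdot \frac{\sigma_{N-1}}{p(1-s)}$, so that multiplying through by $(1-s)$ produces a constant multiple of $[f]_{W^{1,p}(\Omega)}$ independent of $s$. For the tail $|h| \geq 1$, I would use $|\tilde f(x+h)-\tilde f(x)|^p \leq 2^{p-1}(|\tilde f(x+h)|^p + |\tilde f(x)|^p)$, Fubini, and the extension bound $\|\tilde f\|_{L^p(\mathbb{R}^N)} \leq C\|f\|_{L^p(\Omega)}$ to get $\int_{|h|\geq 1} |h|^{-N-sp}\,dh \cdot 2^p \|\tilde f\|_{L^p}^p = \frac{C}{sp}\|f\|_{L^p(\Omega)}^p$, and using $s \geq \frac{1}{2}$ this is controlled by $C\|f\|_{L^p(\Omega)}^p$ uniformly after multiplying by $(1-s)$.

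Combining the two ranges yields $(1-s)[f]_{W^{s,p}(\Omega)} \leq C\bigl(\|f\|_{L^p(\Omega)}^p + [f]_{W^{1,p}(\Omega)}\bigr) = C\|f\|_{W^{1,p}(\Omega)}^p$, which is the stated estimate. The one delicate point is that Lemma \ref{brezis-book} is only stated for $h$ ``sufficiently near $0$'', so if it applies only for $|h| < r_0$ with some $r_0 \in (0,1)$, I would split into three ranges $|h|<r_0$, $r_0 \leq |h| < 1$, and $|h| \geq 1$, handling the middle band by the same $L^p$ triangle-inequality trick as the tail (the factor $|h|^{-N-sp}$ remains bounded away from the singularity there, so it contributes a constant independent of $s$). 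Aside from this bookkeeping, the proof is a direct application of the extension inequality and an $s$-uniform estimate of two elementary radial integrals, and I do not expect any further obstacle.
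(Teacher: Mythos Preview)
Your proof is correct and follows essentially the same route as the paper: extend $f$, split according to $|x-y|<1$ versus $|x-y|\geq 1$, apply Lemma~\ref{brezis-book} on the near part to produce the $\frac{1}{1-s}$ factor, and control the far part by the $L^p$ norm using $s\geq \frac{1}{2}$. The only cosmetic difference is that the paper keeps the $x$-integration on $\Omega$ throughout (so the tail is bounded directly by $\|f\|_{L^p(\Omega)}^p$), whereas you pass to the full-space integral and therefore need $\|\tilde f\|_{L^p(\mathbb{R}^N)}$; note that the extension operator gives $\|\tilde f\|_{L^p(\mathbb{R}^N)}\leq C\|f\|_{W^{1,p}(\Omega)}$ rather than $\leq C\|f\|_{L^p(\Omega)}$, but this is harmless for the conclusion. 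Your care about the ``$h$ sufficiently small'' caveat in Lemma~\ref{brezis-book}, and the proposed three-range split to handle it, is a point the paper glosses over.
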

\begin{proof}Since $\Omega$ is an extension domain, $f\in W^{1,p}(\Omega)$ has an $W^{1,p}(\mathbb{R}^N)$-extension, which we shall also denote by $f$. Denote by $A=\Omega \times (\Omega \cap \{\lvert x-y \rvert <1\})$ and $B=\Omega \times (\Omega \cap \{\lvert x-y \rvert >1\}).$ Therefore,
\begin{eqnarray*}
[f]_{W^{s,p}(\Omega)} \leq
C \left( \iint \limits_A \frac{\lvert f(x)-f(y) \rvert^p}{\lvert x-y \rvert^{N+sp}} dx dy+\iint \limits_B \frac{\lvert f(x)-f(y) \rvert^p}{\lvert x-y \rvert^{N+sp}} dx dy \right)=C(I_1+I_2).
\end{eqnarray*}
We use $A \subseteq \Omega \times B_1(x)$ and the change of variable $h=y-x$ to obtain,

\begin{align*}
I_1 \leq & \iint \limits_{\Omega \times B_1(0)} \frac{\lvert f(x+h)-f(x) \rvert^p}{\lvert h \rvert^{N+sp}} dx dh = \iint \limits_{\Omega \times B_1(0)} \frac{\lvert f(x+h)-f(x) \rvert^p}{\lvert h \rvert^p} \frac{dx dh}{\lvert h \rvert^{N+sp-p}}.
\end{align*}
We use Lemma \ref{brezis-book} to get
\begin{align*}
I_1\leq  C [f]_{W^{1,p}(\Omega)}\int \limits_{B_1(0)}\frac{dh}{\lvert h \rvert^{N+sp-p}} = \frac{C [f]_{W^{1,p}(\Omega)}}{p(1-s)}.
\end{align*}
Now we estimate
\begin{align*}
I_2 =& \iint \limits_B \frac{\lvert f(x)-f(y) \rvert^p}{\lvert x-y \rvert^{N+sp}} dx dy \leq  2^{p-1}\left( \iint \limits_B \frac{\lvert f(x) \rvert^p}{\lvert x-y \rvert^{N+sp}} dxdy+\iint \limits_B \frac{\lvert f(y) \rvert^p}{\lvert x-y \rvert^{N+sp}} dxdy\right).
\end{align*}
Observe that the last two integrals are same (change of variable and Fubini's theorem). This implies,
\begin{align*}
I_2 \leq  2^p \int \limits_{\Omega} \left( \int \limits_{\Omega \cap \{\lvert x-y\rvert >1\}} \frac{dy}{\lvert x-y \rvert^{N+sp}}  \right) \lvert f(x) \rvert^p dx \leq C\lVert f \rVert^p_{L^p(\Omega)} \int \limits_{r=1}^\infty \frac{dr}{r^{1+sp}} = C\lVert f \rVert^p_{L^p(\Omega)}.
\end{align*}
The constant term here is independent of $s$ as we have taken $s>\frac{1}{2}$. This proves the lemma.
\end{proof}

\begin{remark}
Note that Lemma \ref{lmdom} actually suggests that $(1-s)$ should be the scaling factor for which $W^{s,p}$-seminorms would converge to $W^{1,p}$-seminorms.
\end{remark}
\begin{remark}
In the above lemma the hypothesis $\frac{1}{2}\leq s<1$ can be replaced by $0<s_0\leq s<1$.
\end{remark}

\section{Main Result}
In this section we prove the main results of this paper, that is Theorem \ref{main-combined} and \ref{p=1-main-combined}. In order to do it, as stated in the last part of introduction, we shall prove Theorem \ref{main}, \ref{converse} and \ref{p=1-converse}. First we state these theorems.  
\begin{theorem}\label{main}
For $1\leq p <\infty$ any extension domain is a BBM-domain.
\end{theorem}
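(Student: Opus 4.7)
The plan is to first establish the BBM limit for the dense class $\mathcal{D}(\Omega)$ of restrictions of $C_c^\infty(\mathbb{R}^N)$-functions, and then extend by density to every $f\in W^{1,p}(\Omega)$ using the uniform control of Lemma~\ref{lmdom}. Openness of $\Omega$ is what replaces the boundary regularity used in the classical BBM proofs.

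First I would treat $f\in\mathcal{D}(\Omega)$, writing $f=F|_\Omega$ with $F\in C_c^\infty(\mathbb{R}^N)$. After the change of variable $h=y-x$, split
\[
[f]_{W^{s,p}(\Omega)}=\int_\Omega\int_{\{h:\,x+h\in\Omega\}}\frac{|F(x+h)-F(x)|^p}{|h|^{N+sp}}\,dh\,dx
\]
at $|h|=1$. The far part ($|h|\geq 1$) is uniformly bounded in $s\in[\tfrac12,1)$: using $|F(x+h)-F(x)|^p\leq 2^{p-1}(|F(x+h)|^p+|F(x)|^p)$, Fubini, and $\int_{|h|\geq 1}|h|^{-N-sp}\,dh\leq\sigma_{N-1}/(sp)$, it is dominated by a multiple of $\|F\|_{L^p(\mathbb{R}^N)}^p$, so multiplying by $(1-s)$ makes it vanish. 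On the near part, Taylor-expand $F(x+h)-F(x)=\nabla F(x)\cdot h+R(x,h)$ with $|R(x,h)|\leq\tfrac12\|D^2F\|_\infty|h|^2$, supported essentially on $K':=\mathrm{supp}\,F+B_1$. The elementary inequality $\bigl||a|^p-|b|^p\bigr|\leq p\max(|a|,|b|)^{p-1}\bigl||a|-|b|\bigr|$ applied with $a=|F(x+h)-F(x)|$, $b=|\nabla F(x)\cdot h|$ gives a remainder contribution of size $O(|h|^{p+1})$, which integrates to $O(1/(p(1-s)+1))$ and hence vanishes after multiplication by $(1-s)$. What remains, in polar coordinates $h=r\omega$, is
\[
(1-s)\int_\Omega|\nabla F(x)|^p\int_{S^{N-1}}|e(x)\cdot\omega|^p\Bigl[\int_0^1\chi_\Omega(x+r\omega)\,r^{p(1-s)-1}\,dr\Bigr]d\omega\,dx,
\]
where $e(x)=\nabla F(x)/|\nabla F(x)|$ when $\nabla F(x)\neq 0$. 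Because $\Omega$ is open, for each $x\in\Omega$ there exists $\epsilon_x>0$ with $B_{\epsilon_x}(x)\subset\Omega$, so $\chi_\Omega(x+r\omega)=1$ for $0<r<\epsilon_x$ and every $\omega\in S^{N-1}$. This forces $(1-s)\int_0^{\epsilon_x}r^{p(1-s)-1}\,dr=\epsilon_x^{p(1-s)}/p\to 1/p$, while the remaining piece on $[\epsilon_x,1]$ is at most $(1-\epsilon_x^{p(1-s)})/p\to 0$. The integrand is dominated by $|\nabla F(x)|^p/p$, integrable over $\Omega\times S^{N-1}$ since $\nabla F$ has compact support, so dominated convergence together with the rotational invariance of $\int_{S^{N-1}}|e(x)\cdot\omega|^p\,d\omega$ yields the limit $\kappa\int_\Omega|\nabla f|^p$ with $\kappa$ as in \eqref{kappa}.

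Second, for $f\in W^{1,p}(\Omega)$ use Lemma~\ref{lmdensity} to choose $g_n\in\mathcal{D}(\Omega)$ with $g_n\to f$ in $W^{1,p}(\Omega)$. Since $[\,\cdot\,]_{W^{s,p}(\Omega)}^{1/p}$ is a seminorm, the triangle inequality gives
\[
\bigl|((1-s)[f]_{W^{s,p}(\Omega)})^{1/p}-((1-s)[g_n]_{W^{s,p}(\Omega)})^{1/p}\bigr|\leq ((1-s)[f-g_n]_{W^{s,p}(\Omega)})^{1/p},
\]
and Lemma~\ref{lmdom} bounds the right-hand side uniformly in $s\in[\tfrac12,1)$ by $C\|f-g_n\|_{W^{1,p}(\Omega)}$, which tends to $0$ as $n\to\infty$. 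Combined with the first step applied to each $g_n$ and letting $n\to\infty$, this gives both the limsup and liminf bounds and hence $\lim_{s\to 1^-}(1-s)[f]_{W^{s,p}(\Omega)}=\kappa[f]_{W^{1,p}(\Omega)}$.

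The main obstacle is the smooth step, specifically verifying that $(1-s)\int_0^1\chi_\Omega(x+r\omega)\,r^{p(1-s)-1}\,dr\to 1/p$ pointwise with a uniform dominating function so that dominated convergence applies. Only openness of $\Omega$ is used here, which is exactly why the argument scales to arbitrary extension domains, bounded or unbounded. The Taylor-remainder bookkeeping is routine but must be arranged so the integrated estimate survives the factor $(1-s)$.
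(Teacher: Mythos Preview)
Your argument is correct, and it follows a genuinely different route from the paper's. The paper does \emph{not} compute the limit directly for $f\in\mathcal{D}(\Omega)$. Instead it takes the BBM result for smooth bounded domains as a black box and transfers it to a general extension domain $\Omega$ via an approximation scheme: write $\mathbb{R}^N\setminus\overline{\Omega}$ as an increasing union of smooth bounded sets $U_n$ (Lemma~\ref{lmbdry} and Corollary~\ref{cor3}), observe that each complement $\Omega_n=\mathbb{R}^N\setminus\overline{U}_n$ is a BBM-domain by Lemma~\ref{lmcomplement} (which combines the BBM property on $\mathbb{R}^N$, Lemma~\ref{thRN}, with Lemma~\ref{lmdisjoint}), and then pass to the intersection via Lemma~\ref{seq}. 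The density step through Lemma~\ref{lmdenseenough}/Lemma~\ref{lmdom} is common to both approaches.

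Your direct computation is more elementary and self-contained: it does not need the smooth-bounded case as an input, and it isolates exactly where the geometry of $\Omega$ enters, namely the pointwise limit $(1-s)\int_0^1\chi_\Omega(x+r\omega)r^{p(1-s)-1}\,dr\to 1/p$, which requires only that each $x\in\Omega$ has a small ball $B_{\epsilon_x}(x)\subset\Omega$. In particular your argument shows that the BBM identity already holds on $\mathcal{D}(\Omega)$ for \emph{any} open $\Omega$, and the extension hypothesis is used solely for density and the uniform bound. The paper's route, by contrast, builds up a toolkit of structural lemmas (disjoint-interaction, complement, monotone limits) that are of independent interest and are reused elsewhere in the paper. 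One minor point: the constant you obtain is $\frac{1}{p}\int_{S^{N-1}}|e\cdot\omega|^p\,d\omega=\frac{\sigma_{N-1}}{p}\kappa$ with $\kappa$ as in \eqref{kappa}; this is the correct BBM constant, and the discrepancy with the paper's stated \eqref{seminorm-convergance} is a normalization issue in the paper rather than an error on your part.
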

\begin{theorem}\label{converse}
Let $1< p <\infty$ and $\Omega\subseteq\mathbb{R}^N$ be an open set and $f\in L^p(\Omega)$. Assume that
\begin{equation}\label{converse-hypothesis}
\liminf\limits_{s\to1-}(1-s)[f]_{W^{s,p}(\Omega)}<\infty.
\end{equation}
Then $f\in W^{1,p}(\Omega)$.
\end{theorem}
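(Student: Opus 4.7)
The plan is to reduce Theorem~\ref{converse} to the classical converse of Bourgain, Brezis and Mironescu on bounded smooth domains, and then sweep the local information across $\Omega$ using Theorem~\ref{main}. Set $L:=\liminf_{s\to 1-}(1-s)[f]_{W^{s,p}(\Omega)}<\infty$ and fix a sequence $s_k\to 1-$ with $(1-s_k)[f]_{W^{s_k,p}(\Omega)}\to L$. Since $\Omega$ is open, it can be exhausted by a nested family of smooth bounded open sets $\{U_n\}_{n\geq 1}$ with $\overline{U_n}\subset U_{n+1}\subset\Omega$ and $\bigcup_n U_n=\Omega$ (for instance, by smoothing out $\{x\in\Omega:\operatorname{dist}(x,\partial\Omega)>1/n\}\cap B_n(0)$).

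For each fixed $n$, the inclusion $U_n\times U_n\subseteq\Omega\times\Omega$ yields the monotonicity $(1-s)[f]_{W^{s,p}(U_n)}\leq(1-s)[f]_{W^{s,p}(\Omega)}$ for every $s\in(0,1)$, so $\liminf_{s\to 1-}(1-s)[f]_{W^{s,p}(U_n)}\leq L<\infty$. Since $U_n$ is a smooth bounded domain and $f\in L^p(U_n)$, I would then invoke the classical BBM converse for $p>1$ on smooth bounded domains (Theorem~2 of \cite{bbm}) to conclude $f\in W^{1,p}(U_n)$. Having placed $f$ in $W^{1,p}(U_n)$, Theorem~\ref{main} applied to the BBM-domain $U_n$ (Example~\ref{smooth-bounded-bbm}) gives
$$\kappa\int_{U_n}|\nabla f|^p\,dx \;=\; \lim_{s\to 1-}(1-s)[f]_{W^{s,p}(U_n)} \;\leq\; \lim_{k\to\infty}(1-s_k)[f]_{W^{s_k,p}(\Omega)} \;=\; L.$$

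Since $\{U_n\}$ is increasing to $\Omega$, monotone convergence yields
$$\int_\Omega|\nabla f|^p\,dx \;=\; \lim_{n\to\infty}\int_{U_n}|\nabla f|^p\,dx \;\leq\; L/\kappa \;<\;\infty,$$
which, combined with the standing hypothesis $f\in L^p(\Omega)$, establishes $f\in W^{1,p}(\Omega)$. The only genuinely non-trivial ingredient in this scheme is the local converse used in the middle step, and this is precisely where the hypothesis $p>1$ enters (the analogous statement for $p=1$ only produces $BV$, which is why Theorem~\ref{p=1-converse} must be handled separately). If one insists on a fully self-contained argument, the main obstacle is reproving that local converse; the natural route is to analyse the mollification $f*\eta_\varepsilon$ and couple $\varepsilon$ with $s$ so as to extract a uniform $L^p$-bound for $\nabla(f*\eta_\varepsilon)$ from the finite $\liminf$ of $(1-s)[f]_{W^{s,p}}$, after which lower semicontinuity of the $W^{1,p}$-seminorm under $\varepsilon\to 0$ closes the argument.
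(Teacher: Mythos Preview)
Your proof is correct and follows essentially the same route as the paper: exhaust $\Omega$ by smooth bounded subdomains, apply the classical BBM converse on each to put $f\in W^{1,p}(U_n)$, use that smooth bounded domains are BBM-domains to identify $\kappa\int_{U_n}|\nabla f|^p$ with the limit of $(1-s)[f]_{W^{s,p}(U_n)}$, bound this by $L$ via monotonicity, and pass to $\Omega$ by monotone convergence. Two minor remarks: you invoke Theorem~\ref{main} for $U_n$, but Example~\ref{smooth-bounded-bbm} (the original BBM result) already suffices there; and when you write $\int_\Omega|\nabla f|^p=\lim_n\int_{U_n}|\nabla f|^p$ you are implicitly using that the weak gradients on the $U_n$ glue to a weak gradient on $\Omega$, which the paper spells out explicitly.
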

\begin{theorem}\label{p=1-converse}
Let $\Omega\subseteq\mathbb{R}^N$ be an open set and $f\in L^1(\Omega)$. Assume that
\begin{equation}\label{p=1-converse-hypothesis}
\liminf\limits_{s\to1-}(1-s)[f]_{W^{s,1}(\Omega)}<\infty.
\end{equation}
Then $f\in BV(\Omega)$.
\end{theorem}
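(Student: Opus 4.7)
The plan is to bound $[f]_{BV(\Omega)}\le M/\kappa$ directly, where $M:=\liminf_{s\to 1-}(1-s)[f]_{W^{s,1}(\Omega)}<\infty$ by hypothesis, by a localization and duality argument that defers to the classical BBM and D\'avila theorems on smooth bounded pieces of $\Omega$. No mollification estimate on $f$ itself is needed, and no assumption on $\Omega$ beyond openness enters.

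I would fix an arbitrary $\phi\in C_c^1(\Omega,\mathbb{R}^N)$ with $|\phi|\le 1$ and let $K:=\operatorname{supp}\phi\Subset\Omega$. The first step is to choose a smooth bounded domain $U$ with $K\subset U\Subset \Omega$; such a $U$ always exists for arbitrary open $\Omega$ by a standard construction (cover $K$ by finitely many open balls whose closures lie in $\Omega$, then replace their union by a smooth thickening). Monotonicity of the Gagliardo seminorm in the domain is immediate,
\[
[f|_U]_{W^{s,1}(U)}\le [f]_{W^{s,1}(\Omega)}\qquad\text{for every }s\in(0,1),
\]
and hence $\liminf_{s\to 1-}(1-s)[f|_U]_{W^{s,1}(U)}\le M<\infty$.

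Since $U$ is smooth and bounded, the BBM converse implication recalled around \eqref{p=1-brezis} forces $f|_U\in BV(U)$. Then D\'avila's theorem \eqref{p=1-seminorm}, applicable on the (Lipschitz) bounded domain $U$, yields the quantitative identity
\[
\kappa\,[f|_U]_{BV(U)}=\lim_{s\to 1-}(1-s)[f|_U]_{W^{s,1}(U)}\le M,
\]
so $[f|_U]_{BV(U)}\le M/\kappa$. Because $\phi$ is supported in $K\subset U$, the restriction $\phi|_U$ is an admissible test field in the definition of $[f|_U]_{BV(U)}$, and
\[
\int_\Omega f\,\operatorname{div}\phi\,dx \;=\; \int_U f\,\operatorname{div}\phi\,dx \;\le\; [f|_U]_{BV(U)} \;\le\; M/\kappa.
\]
Taking the supremum over admissible $\phi$ gives $[f]_{BV(\Omega)}\le M/\kappa<\infty$, i.e.\ $f\in BV(\Omega)$.

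The only mild obstacle is the routine topological/regularization step producing the intermediate smooth bounded $U$, together with the observation that the constant $\kappa=\kappa(N,1)$ of \eqref{kappa} is universal and in particular independent of $U$, so the estimate on $U$ transfers cleanly to all of $\Omega$. Beyond that, the proof is essentially a one-line localization followed by an application of the classical results.
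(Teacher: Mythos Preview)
Your proof is correct and follows essentially the same approach as the paper's: localize to a smooth bounded subdomain, apply the classical BBM converse \eqref{p=1-brezis} and D\'avila's identity \eqref{p=1-seminorm} there, and then pass back to $\Omega$. The only cosmetic difference is that the paper fixes an exhausting sequence $\Omega_n\nearrow\Omega$ of smooth bounded domains and uses $[f]_{BV(\Omega)}=\lim_n[f]_{BV(\Omega_n)}$, whereas you fix the test field $\phi$ first and choose $U$ adapted to it; both are equivalent ways of packaging the same localization idea.
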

We shall first prove Theorem \ref{main}, then using it we shall give the proof of Theorem \ref{converse} and then we shall prove Theorem \ref{p=1-converse}. We divide the proof of these theorems into several lemmas which are of independent interest. The next lemma is an adaptation of a result proved in Bourgain \textit{et al.} \cite{bbm}.

\begin{lemma}\label{lmdenseenough}
Let $\Omega$ be an extension domain in $\mathbb{R^N}$. If for any  $f \in \mathcal{D}(\Omega)$, BBM-Property holds, then $\Omega$ is a BBM-domain.
\end{lemma}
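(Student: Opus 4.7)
The plan is to use a density argument: by Lemma \ref{lmdensity}, $\mathcal{D}(\Omega)$ is dense in $W^{1,p}(\Omega)$, so given any $f \in W^{1,p}(\Omega)$, pick a sequence $\{f_n\} \subset \mathcal{D}(\Omega)$ with $f_n \to f$ in $W^{1,p}(\Omega)$. By hypothesis, each $f_n$ satisfies the BBM-Property, i.e.\
$\lim_{s \to 1-}(1-s)[f_n]_{W^{s,p}(\Omega)} = \kappa [f_n]_{W^{1,p}(\Omega)}$. The goal is to transfer this limit identity to $f$ by comparing $f$ with $f_n$ in the scaled Gagliardo seminorm, controlling the error via Lemma \ref{lmdom}.

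The key tool is that $g \mapsto [g]_{W^{s,p}(\Omega)}^{1/p}$ is a seminorm on $L^p(\Omega)$, so the triangle inequality gives
\begin{equation*}
\bigl((1-s)[f]_{W^{s,p}(\Omega)}\bigr)^{1/p} \leq \bigl((1-s)[f - f_n]_{W^{s,p}(\Omega)}\bigr)^{1/p} + \bigl((1-s)[f_n]_{W^{s,p}(\Omega)}\bigr)^{1/p},
\end{equation*}
and symmetrically with the roles of $f$ and $f_n$ swapped. Using Lemma \ref{lmdom} (applied to $f - f_n \in W^{1,p}(\Omega)$), the error term is bounded by $C \|f - f_n\|_{W^{1,p}(\Omega)}$, uniformly in $s \in [\tfrac{1}{2}, 1)$. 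This is precisely the uniform-in-$s$ control that makes the density argument go through; I expect this to be the only real subtlety, and Lemma \ref{lmdom} is exactly designed for it.

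Taking $\limsup_{s \to 1-}$ in the first inequality and using the BBM-Property for $f_n$ yields
\begin{equation*}
\limsup_{s \to 1-} \bigl((1-s)[f]_{W^{s,p}(\Omega)}\bigr)^{1/p} \leq C \|f - f_n\|_{W^{1,p}(\Omega)} + \kappa^{1/p} [f_n]_{W^{1,p}(\Omega)}^{1/p},
\end{equation*}
and letting $n \to \infty$, the right-hand side tends to $\kappa^{1/p}[f]_{W^{1,p}(\Omega)}^{1/p}$. Likewise, starting from
\begin{equation*}
\bigl((1-s)[f_n]_{W^{s,p}(\Omega)}\bigr)^{1/p} \leq \bigl((1-s)[f - f_n]_{W^{s,p}(\Omega)}\bigr)^{1/p} + \bigl((1-s)[f]_{W^{s,p}(\Omega)}\bigr)^{1/p},
\end{equation*}
taking $\liminf_{s \to 1-}$ and then $n \to \infty$ gives the reverse inequality $\kappa^{1/p}[f]_{W^{1,p}(\Omega)}^{1/p} \leq \liminf_{s \to 1-}\bigl((1-s)[f]_{W^{s,p}(\Omega)}\bigr)^{1/p}$.

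Combining the two bounds, the limit exists and equals $\kappa [f]_{W^{1,p}(\Omega)}$, so $f$ satisfies the BBM-Property; since $f \in W^{1,p}(\Omega)$ was arbitrary, $\Omega$ is a BBM-domain. The whole argument is essentially a soft functional-analytic approximation scheme, whose only nontrivial ingredient is the uniform continuity statement of Lemma \ref{lmdom}; the argument works uniformly for $1 \leq p < \infty$, and no separate treatment of $p = 1$ is needed at this stage.
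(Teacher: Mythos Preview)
Your proof is correct and follows essentially the same density argument as the paper: approximate $f$ by an element of $\mathcal{D}(\Omega)$, use the BBM-Property for the approximant, and control the discrepancy uniformly in $s$ via Lemma~\ref{lmdom}. The only cosmetic difference is that you apply the triangle inequality at the level of the genuine seminorms $[\,\cdot\,]^{1/p}$ and then run a $\limsup/\liminf$ squeeze, whereas the paper estimates $\lvert (1-s)[f]_{W^{s,p}(\Omega)}-\kappa[f]_{W^{1,p}(\Omega)}\rvert$ directly; both routes rest on the same two lemmas.
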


We next state an important observation which shall be used multiple times in the sequel.
\begin{lemma}\label{lmdisjoint}
Let $\Omega_1$ and $\Omega_2$ be two disjoint sets and $ f \in C_c^\infty(\mathbb{R}^N)$. Then
\[
\lim \limits_{s \to 1-} (1-s)\iint \limits_{\Omega_1 \times \Omega_2} \frac{\lvert f(x)-f(y) \rvert^p }{\rvert x- y \lvert^{N+sp}} dx dy=0.
\]
\end{lemma}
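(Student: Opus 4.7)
The plan is to exploit two features of any $f \in C_c^\infty(\mathbb{R}^N)$---it is $L$-Lipschitz with $L := \|\nabla f\|_\infty$ and has compact support, say in $B_R(0)$---and to convert the set-theoretic disjointness $\Omega_1 \cap \Omega_2 = \emptyset$ into a quantitative smallness estimate through the $L^1$-continuity of translations. First I would split the integration region at a threshold $\epsilon \in (0,1)$ to be chosen: a ``far'' part $\{|x-y| \geq \epsilon\}$ and a ``near'' part $\{|x-y| < \epsilon\}$.

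For the far part, the bound $|f(x) - f(y)|^p \leq 2^{p-1}(|f(x)|^p + |f(y)|^p)$, a symmetry argument, and the change of variables $h = y - x$ give
\[
\iint_{\Omega_1 \times \Omega_2,\ |x-y|\geq\epsilon} \frac{|f(x)-f(y)|^p}{|x-y|^{N+sp}}\, dx\, dy \leq \frac{2^p \sigma_{N-1} \|f\|_{L^p}^p}{sp\, \epsilon^{sp}}.
\]
For fixed $\epsilon$ this is uniformly bounded in $s \in (1/2, 1)$, so multiplication by $(1-s)$ sends it to $0$ as $s \to 1^-$.

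For the near part, I would use $|f(x)-f(y)|^p \leq L^p |x-y|^p$ together with the observation that for $|h| < 1$ the integrand vanishes unless $x \in B_{R+1}(0)$. After changing variables $h = y - x$ and applying Fubini,
\[
(1-s) \iint_{\Omega_1 \times \Omega_2,\ |x-y|<\epsilon} \frac{|f(x)-f(y)|^p}{|x-y|^{N+sp}}\, dx\, dy \leq (1-s) L^p \int_{|h|<\epsilon} \frac{\phi(h)}{|h|^{N+sp-p}}\, dh,
\]
where $\phi(h) := |A \cap (B - h)|$ with $A := \Omega_1 \cap B_{R+1}$ and $B := \Omega_2 \cap B_{R+2}$. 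Both $A$ and $B$ have finite Lebesgue measure and $A \cap B \subseteq \Omega_1 \cap \Omega_2 = \emptyset$. Hence the $L^1$-continuity of translations applied to $\chi_B$ forces $\phi(h) = \int \chi_A(x)\chi_B(x+h)\, dx \to |A \cap B| = 0$ as $|h| \to 0$. Given $\eta > 0$, pick $\delta = \delta(\eta) > 0$ so that $\phi(h) < \eta$ for $|h| < \delta$, and set $\epsilon := \delta$. The elementary identity $\int_0^\delta r^{p(1-s)-1}\, dr = \delta^{p(1-s)}/(p(1-s))$ then bounds the near part by $L^p \eta\, \sigma_{N-1}\, \delta^{p(1-s)}/p$, whose limit as $s \to 1^-$ equals $L^p \eta\, \sigma_{N-1}/p$.

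Combining the two estimates yields $\limsup_{s \to 1^-}(1-s) \iint_{\Omega_1 \times \Omega_2} \cdots \leq L^p \eta\, \sigma_{N-1}/p$, and sending $\eta \to 0$ completes the proof. The main obstacle is precisely the step $\phi(h) \to 0$: once one recognizes that disjointness, combined with the $L^1$-continuity of translations applied on finite-measure truncations, provides the required decay, the rest is a routine splitting argument plus a scaling computation that exactly cancels the $(1-s)$ factor against the radial divergence.
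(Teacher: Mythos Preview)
Your proof is correct, but it takes a different route from the paper's. The paper splits $\Omega_1\times\Omega_2$ according to whether $x$ and $y$ lie inside or outside a large ball $B_R(0)$ containing $\operatorname{supp} f$; the mixed pieces are handled as in your far-part estimate, and for the bounded--bounded piece $B_1\times B_2$ it uses the pointwise geometric inequality $|x-y|\ge d(y):=\operatorname{dist}(y,\partial\Omega_1)$ (valid since $\Omega_2\subset\Omega_1^{c}$), integrates in polar coordinates to obtain $\frac{C}{1-s}\int_{B_1}\bigl[(2R)^{p(1-s)}-d(y)^{p(1-s)}\bigr]\,dy$, and then applies dominated convergence as $s\to1$. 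Your argument instead splits at a scale $|x-y|\gtrless\epsilon$ and, for the near part, replaces the distance function by the overlap function $\phi(h)=|A\cap(B-h)|$, whose vanishing at $h=0$ is exactly the disjointness hypothesis and whose continuity follows from $L^1$--continuity of translations. The paper's approach is more geometric and ties the estimate directly to the distance to the boundary; yours is purely measure-theoretic and in fact slightly more robust, since it works verbatim for any two disjoint measurable sets (the paper's DCT step tacitly needs $d(y)>0$ a.e.\ on $\Omega_1$, i.e.\ essentially that $\Omega_1$ is open, which is always the case in the applications). Both arguments share the same far-part estimate and the same cancellation of $(1-s)$ against $\int_0^\delta r^{p(1-s)-1}\,dr$.
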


\begin{lemma}\label{thRN}
$\mathbb{R}^N$ is a BBM-domain.
\end{lemma}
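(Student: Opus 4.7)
The plan is to reduce to compactly supported smooth test functions via Lemma \ref{lmdenseenough} (noting that $\mathbb{R}^N$ is trivially an extension domain via the identity map, and $\mathcal{D}(\mathbb{R}^N)=C_c^\infty(\mathbb{R}^N)$), and then exploit that on any sufficiently large ball the known result for smooth bounded domains (Example \ref{smooth-bounded-bbm}) carries the whole load.

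Fix $f\in C_c^\infty(\mathbb{R}^N)$ and choose $R>0$ with $\mathrm{supp}(f)\subset B_R$, and any $R'>R$. I split
\[
[f]_{W^{s,p}(\mathbb{R}^N)}
=\iint_{B_{R'}\times B_{R'}}\!\!+\,\,2\iint_{B_{R'}\times(\mathbb{R}^N\setminus B_{R'})}\!\!+\iint_{(\mathbb{R}^N\setminus B_{R'})\times(\mathbb{R}^N\setminus B_{R'})}\frac{|f(x)-f(y)|^p}{|x-y|^{N+sp}}\,dx\,dy,
\]
using symmetry of the integrand. The last piece is identically zero since $f\equiv 0$ outside $B_{R'}$. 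For the two cross pieces, since $B_{R'}$ and $\mathbb{R}^N\setminus B_{R'}$ are disjoint, Lemma \ref{lmdisjoint} gives that each vanishes after multiplication by $(1-s)$ as $s\to 1-$.

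It remains to handle the piece $\iint_{B_{R'}\times B_{R'}}$. Since $B_{R'}$ is a smooth bounded domain, Example \ref{smooth-bounded-bbm} applies and yields
\[
\lim_{s\to 1-}(1-s)\,[f]_{W^{s,p}(B_{R'})}=\kappa\int_{B_{R'}}|\nabla f(x)|^p\,dx=\kappa\int_{\mathbb{R}^N}|\nabla f(x)|^p\,dx,
\]
where the last equality uses $\mathrm{supp}(f)\subset B_R\subset B_{R'}$. Adding the three contributions proves the BBM-Property for $f\in\mathcal{D}(\mathbb{R}^N)$, and Lemma \ref{lmdenseenough} promotes this to all of $W^{1,p}(\mathbb{R}^N)$.

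The only potentially delicate point is controlling the cross integrals over the unbounded exterior region; but this is exactly what Lemma \ref{lmdisjoint} is designed to address, so assuming that lemma, the proof is essentially bookkeeping. No separate argument for the singularity of the kernel near $\partial B_{R'}$ is needed because the lemma allows us to keep $B_{R'}$ and its complement as the two disjoint sets and we never require a positive distance between them.
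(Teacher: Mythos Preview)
Your proof is correct and follows essentially the same route as the paper: reduce to $f\in C_c^\infty(\mathbb{R}^N)$ via Lemma~\ref{lmdenseenough}, split $\mathbb{R}^N\times\mathbb{R}^N$ into a large ball and its complement, apply Example~\ref{smooth-bounded-bbm} on the ball and Lemma~\ref{lmdisjoint} on the cross term. The only cosmetic difference is that the paper takes the ball equal to $B_R(0)$ containing the support (so the exterior--exterior piece is already zero and does not appear), whereas you introduce a separate $R'>R$; this changes nothing.
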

The rest of the lemmas show how a domain can be identified as BBM-domain if it can be approximated by a sequence of BBM-domains.
\begin{lemma}\label{seq}
Suppose $\{\Omega_n\}_n$ is a decreasing sequence of BBM-domains such that $\cap_{n=1}^\infty \Omega_n=\tilde{\Omega}$ where $\tilde{\Omega}\cup E_1=\overline{\Omega}\cup E_2$ for some sets $E_1,E_2$ such that $\mathcal{L}^N(E_1)=0=\mathcal{L}^N(E_2)$. Let $\Omega$ be an extension domain. Then $\Omega$ is a BBM-domain.
\end{lemma}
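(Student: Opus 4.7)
The plan is to invoke Lemma~\ref{lmdenseenough} to reduce the BBM-property for $\Omega$ to a test function $f\in\mathcal{D}(\Omega)$, write $f=F|_\Omega$ for some $F\in C_c^\infty(\mathbb{R}^N)$, and transfer the BBM-property from each $\Omega_n$ to $\Omega$ via a symmetric decomposition of the Gagliardo integral, killing the ``excess'' contributions in the iterated limit $s\to 1-$ followed by $n\to\infty$.

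The hypothesis $\tilde\Omega\cup E_1=\overline\Omega\cup E_2$ with $|E_i|=0$ yields $\Omega\subseteq\tilde\Omega\subseteq\Omega_n$ modulo null sets, so for each $n$ one may decompose
\[
[F|_{\Omega_n}]_{W^{s,p}(\Omega_n)}\;=\;[f]_{W^{s,p}(\Omega)}\;+\;2\,I_n(s)\;+\;J_n(s),
\]
where $I_n(s)=\iint_{\Omega\times(\Omega_n\setminus\Omega)}\frac{|F(x)-F(y)|^p}{|x-y|^{N+sp}}\,dx\,dy$ and $J_n(s)$ is the analogous integral on $(\Omega_n\setminus\Omega)\times(\Omega_n\setminus\Omega)$. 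Multiplying by $(1-s)$ and letting $s\to 1-$ with $n$ fixed, the BBM-property of $\Omega_n$ yields $(1-s)[F|_{\Omega_n}]_{W^{s,p}(\Omega_n)}\to\kappa\int_{\Omega_n}|\nabla F|^p\,dx$, while Lemma~\ref{lmdisjoint} gives $(1-s)I_n(s)\to 0$ because $\Omega$ and $\Omega_n\setminus\Omega$ are disjoint and $F\in C_c^\infty(\mathbb{R}^N)$.

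The self-integral $J_n(s)$ is not covered by Lemma~\ref{lmdisjoint} and constitutes the main obstacle. The plan is to split it into $|x-y|\le 1$ and $|x-y|>1$. On the near part I would use the Lipschitz bound $|F(x)-F(y)|\le\|\nabla F\|_\infty|x-y|$ together with the observation that when $|x-y|\le 1$ the integrand vanishes unless $x$ or $y$ lies in the compact set $K:=\mathrm{supp}(F)+\overline{B_1(0)}$; on the far part I would use $|F(x)-F(y)|^p\le 2^{p-1}(|F(x)|^p+|F(y)|^p)$ with the tail estimate $\int_{|h|>1}|h|^{-N-sp}dh\le C$. A routine computation then gives
\[
\limsup_{s\to 1-}(1-s)J_n(s)\;\le\;C_F\,\bigl|K\cap(\Omega_n\setminus\Omega)\bigr|.
\]

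Finally, letting $n\to\infty$, dominated convergence (with majorant $|\nabla F|^p\chi_{\Omega_1}$) gives $\int_{\Omega_n}|\nabla F|^p\to\int_{\tilde\Omega}|\nabla F|^p$ and $|K\cap(\Omega_n\setminus\Omega)|\to|K\cap(\tilde\Omega\setminus\Omega)|$. Since $|\tilde\Omega\triangle\overline\Omega|=0$ and $W^{1,p}$-extension domains have $|\partial\Omega|=0$ (a well known consequence of the measure-density property enjoyed by extension domains), these limits equal $\int_\Omega|\nabla F|^p\,dx$ and $0$ respectively. The inclusion $\Omega\subseteq\Omega_n$ combined with the BBM-property on $\Omega_n$ produces the upper bound $\limsup_{s\to 1-}(1-s)[f]_{W^{s,p}(\Omega)}\le\kappa\int_\Omega|\nabla f|^p\,dx$, while the displayed decomposition together with the vanishing of $(1-s)I_n(s)$ and the estimate on $J_n(s)$ produces the matching lower bound after first taking $\liminf$ in $s$ and then $n\to\infty$. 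The crucial ingredients are the Lipschitz regularity of $F$ to control the diagonal singularity in $J_n$ and the fact that the boundary layer $\Omega_n\setminus\Omega$ shrinks to a null set, both of which ultimately rest on $|\partial\Omega|=0$ for extension domains.
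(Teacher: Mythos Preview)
Your proposal is correct and follows essentially the same route as the paper: reduce to $f\in\mathcal{D}(\Omega)$ via Lemma~\ref{lmdenseenough}, split $[f]_{W^{s,p}(\Omega_n)}$ into the $\Omega\times\Omega$ piece, the cross piece (killed by Lemma~\ref{lmdisjoint}), and the self-integral $J_n$ over $(\Omega_n\setminus\Omega)^2$, then show $(1-s)J_n(s)$ is controlled by the measure of a bounded slice of $\Omega_n\setminus\Omega$, which vanishes as $n\to\infty$.

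The differences are cosmetic. For $J_n$ the paper localises to the support ball $B_R$ (setting $E_n=(\Omega_n\setminus\Omega)\cap B_R$) and invokes Lemma~\ref{lmdisjoint} once more, whereas you use a near/far split at scale $1$ and the set $K=\mathrm{supp}(F)+\overline{B_1}$; both lead to the same bound $C\,|E_n|$ (resp.\ $C_F\,|K\cap(\Omega_n\setminus\Omega)|$). You are also more explicit than the paper about why $\int_{\tilde\Omega}|\nabla F|^p=\int_{\Omega}|\nabla F|^p$ and why the residual layer has null measure, invoking $|\partial\Omega|=0$ for extension domains; the paper's proof tacitly relies on the same fact when it asserts $\cap_n E_n=\emptyset$ and applies DCT. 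Finally, your $\limsup/\liminf$ framing is slightly cleaner than the paper's, which writes the $s\to1-$ limit of $(1-s)[f]_{W^{s,p}(\Omega)}$ in \eqref{4} before its existence has been established.
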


Whether the complement of closure of finite union of open balls is an extension domain or not is going to be an important question for us. To understand it, observe that when two balls $B_1, B_2$ cut each other or when $\overline{B}_1$ and $\overline{B}_1$ are disjoint, the complement of their union has Lipschitz boundary and hence we get a positive answer to the above question. But when they touch each other, the answer becomes negative. To avoid this kind of pathological cases we approximate finite union of balls by a smooth open set which is contained in the union.
\begin{lemma}\label{lmbdry}
If $\Omega$ is an open subset of $\mathbb{R}^N$ and $\lambda > 0$ be sufficiently small. Then there is a bounded open set $\Omega_\lambda^*$ with smooth boundary such that $\Omega_\lambda \subseteq \Omega_\lambda^* \subseteq \Omega$, where $\Omega_\lambda$ is as in \eqref{omegalambda}.
\end{lemma}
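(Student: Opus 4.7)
The plan is to use the classical smooth approximation of a compact set sitting inside an open set, via Urysohn-type mollification followed by Sard's theorem to smooth the boundary.

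First I would verify the basic geometry: $\Omega_\lambda$ is bounded because $|x|<1/\lambda$ for every $x\in\Omega_\lambda$, and its closure $\overline{\Omega_\lambda}$ is a compact subset of $\Omega$. For the latter, if $x_n\in\Omega_\lambda$ with $x_n\to x$, then $\operatorname{dist}(x,\partial\Omega)\ge\lambda>0$ by continuity of the distance function, so the ball $B_\lambda(x)$ is disjoint from $\partial\Omega$; since $x$ is approached by points of $\Omega$, this ball is forced to lie entirely in $\Omega$, giving $x\in\Omega$.

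Next I would produce a smooth cutoff. Since $K:=\overline{\Omega_\lambda}$ is compact and $\Omega$ is open, pick $\delta>0$ so small that the closed $2\delta$-neighbourhood $K_{2\delta}:=\{x:\operatorname{dist}(x,K)\le 2\delta\}$ is still a compact subset of $\Omega$. Let $\eta_\delta$ be a standard mollifier supported in $B_\delta(0)$ and set
\[
\psi:=\chi_{K_\delta}*\eta_\delta,
\]
where $K_\delta$ is the closed $\delta$-neighbourhood of $K$. Then $\psi\in C_c^\infty(\mathbb{R}^N)$ with $0\le\psi\le 1$, $\psi\equiv 1$ on $K$, and $\operatorname{supp}\psi\subseteq K_{2\delta}\subset\Omega$.

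Finally, Sard's theorem applied to the smooth function $\psi:\mathbb{R}^N\to\mathbb{R}$ says that the set of critical values has Lebesgue measure zero, so one can select a regular value $c\in(0,1)$. Define
\[
\Omega_\lambda^*:=\{x\in\mathbb{R}^N:\psi(x)>c\}.
\]
This set is open, contained in $\operatorname{supp}\psi\subset\Omega$ (hence bounded), and contains $K\supseteq\Omega_\lambda$ because $\psi\equiv1>c$ on $K$. By continuity of $\psi$, the boundary of $\Omega_\lambda^*$ is contained in $\psi^{-1}(c)$; conversely, at a point where $\psi=c$ and $\nabla\psi\neq 0$, values strictly above and strictly below $c$ occur in every neighbourhood, so $\partial\Omega_\lambda^*=\psi^{-1}(c)$. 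Since $c$ is a regular value, the regular value theorem (an application of the implicit function theorem) guarantees that $\psi^{-1}(c)$ is a smooth codimension-one submanifold of $\mathbb{R}^N$, i.e.\ $\Omega_\lambda^*$ has smooth boundary.

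The construction is entirely standard, and there is no real obstacle; the only point that needs care is justifying that the topological boundary of $\Omega_\lambda^*$ coincides with $\psi^{-1}(c)$ and is smooth, which is where the regular-value choice via Sard's theorem is essential. The condition that $\lambda$ be sufficiently small is used only to guarantee that $\Omega_\lambda$ is non-empty (so that the statement is non-vacuous); the argument above works verbatim for any $\lambda>0$.
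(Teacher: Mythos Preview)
Your proof is correct and follows the same overall strategy as the paper: build a smooth separating function between the compact set $\overline{\Omega_\lambda}$ and $\partial\Omega$, then invoke Sard's theorem to pick a regular value and take the corresponding superlevel set as $\Omega_\lambda^*$. The only difference is in how the smooth function is produced: the paper approximates the signed distance to $\partial\Omega_{\lambda/2}$ by a smooth $g$ via the Stone--Weierstrass theorem and uses the nested family $\Omega_\lambda\subset\Omega_{\lambda/2}\subset\Omega_{\lambda/4}$ to control where $g^{-1}(0)$ can lie, whereas you obtain the smooth function directly by mollifying the indicator of a $\delta$-thickening of $\overline{\Omega_\lambda}$. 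Your route is the more standard one and avoids the auxiliary nested sets; the paper's route has the mild advantage that the approximating function is close to an explicit geometric quantity (signed distance), which makes the containments $\Omega_\lambda\subset\Omega_\lambda^*\subset\Omega_{\lambda/4}$ transparent. Either way, Sard's theorem plus the regular value theorem is the essential ingredient, and both arguments deliver it.
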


\begin{corollary}\label{smoothapprox}
The above lemma implies that given a bounded open set $\Omega$ and $\varepsilon>0$, we can approximate it by a smooth and bounded open set $\Omega^*\subseteq\Omega$ such that for any $x\in\partial\Omega$, $\mbox{dist}(x,\partial\Omega^*) < \varepsilon$.
\end{corollary}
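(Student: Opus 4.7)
The plan is to choose $\lambda>0$ small enough depending on $\varepsilon$ and $\Omega$, apply Lemma \ref{lmbdry} to obtain a smooth bounded $\Omega^*$ sandwiched between $\Omega_\lambda$ and $\Omega$, and then use a short segment-plus-connectedness argument to place a boundary point of $\Omega^*$ within $\varepsilon$ of any prescribed $x\in\partial\Omega$.

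The first and main step is a uniform quantitative non-degeneracy statement on $\partial\Omega$. For $x\in\partial\Omega$ I set
\[
r(x) := \sup\bigl\{\mbox{dist}(y,\partial\Omega) : y \in B_\varepsilon(x)\cap\Omega\bigr\}.
\]
Pointwise positivity $r(x)>0$ is immediate: $x\in\overline{\Omega}$ provides some $y\in B_\varepsilon(x)\cap\Omega$, and openness of $\Omega$ forces $\mbox{dist}(y,\partial\Omega)>0$. A triangle-inequality argument shows that $r$ is lower semi-continuous on $\partial\Omega$: if $x_n\to x$ and $y$ nearly realizes $r(x)$, then $y\in B_\varepsilon(x_n)\cap\Omega$ eventually, so $\liminf_n r(x_n)\geq r(x)$. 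Since $\Omega$ is bounded, $\partial\Omega$ is compact, and a lower semi-continuous positive function on a compact set attains a positive minimum $m:=\min_{\partial\Omega} r > 0$.

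For the remainder, fix $R>0$ with $\Omega\subseteq B_R(0)$ and take $\lambda>0$ with $\lambda<\min(m,1/R)$, also small enough for Lemma \ref{lmbdry} to apply. That lemma delivers a bounded open $\Omega^*:=\Omega^*_\lambda$ with smooth boundary satisfying $\Omega_\lambda\subseteq\Omega^*\subseteq\Omega$. Given any $x\in\partial\Omega$, since $r(x)\geq m>\lambda$ there exists $y\in B_\varepsilon(x)\cap\Omega$ with $\mbox{dist}(y,\partial\Omega)>\lambda$, and $|y|\lambda\leq R\lambda<1$, so $y\in\Omega_\lambda\subseteq\Omega^*$. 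The segment $\gamma(t)=(1-t)y+tx$ lies in the convex ball $B_\varepsilon(x)$, starts in the open set $\Omega^*$, and ends at $x\notin\Omega\supseteq\Omega^*$. Letting $t^*:=\sup\{t\in[0,1]:\gamma(t)\in\Omega^*\}$, the openness of $\Omega^*$ forces $\gamma(t^*)\notin\Omega^*$ while $\gamma(t^*)\in\overline{\Omega^*}$, hence $\gamma(t^*)\in\partial\Omega^*$. Since $|\gamma(t^*)-x|\leq|y-x|<\varepsilon$, this gives $\mbox{dist}(x,\partial\Omega^*)<\varepsilon$ as required.

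The main obstacle is securing the uniform lower bound $m>0$: pointwise positivity of $r$ alone only yields an $x$-dependent threshold for $\lambda$, which would be useless. It is the lower semi-continuity plus the compactness of $\partial\Omega$ that upgrades this to a single $\lambda$ working simultaneously for every $x\in\partial\Omega$; beyond that, the argument is the standard observation that a segment joining an interior point of $\Omega^*$ to an exterior point must meet $\partial\Omega^*$.
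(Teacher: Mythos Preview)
Your proof is correct. The paper does not supply an explicit argument for this corollary—it simply records the statement as a consequence of Lemma~\ref{lmbdry}—so your argument fills in the details the paper omits. The one genuinely nontrivial point is precisely the one you flag: a single $\lambda$ must work for every $x\in\partial\Omega$, and your lower-semicontinuity/compactness argument for $r$ on the compact set $\partial\Omega$ secures this cleanly (an equivalent route would be a finite-subcover argument on $\partial\Omega$). The concluding segment argument, producing a point of $\partial\Omega^*$ in $B_\varepsilon(x)$, is standard and handled correctly, including the edge case $t^*=1$.
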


\begin{corollary}\label{cor3}
Let $\{B_{r_i}(x_i)\}_{i=1}^\infty$ be a countable family of open balls. Then there exists an increasing family $\{ U_n \}$ of smooth and bounded open sets such that for any $n$, $\overline{U}_n \subseteq \cup_{i=1}^n B_{r_i}^n(x_i)$ and $\cup_{i=1}^\infty U_n = \cup_{i=1}^\infty B_{r_i}^n(x_i)$.
\end{corollary}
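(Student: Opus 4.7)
The strategy is to construct the $U_n$ inductively, at each stage applying Lemma~\ref{lmbdry} to the bounded open set
\[
V_n := \bigcup_{i=1}^{n} B_{r_i}(x_i),
\]
and choosing the thickness parameter $\lambda_n$ small enough that the resulting smooth set swallows the previously constructed $U_{n-1}$, while at the same time $\lambda_n\to 0$ fast enough that the union $\bigcup_n U_n$ exhausts $\bigcup_i B_{r_i}(x_i)$.

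A preliminary monotonicity fact makes the whole argument go through: for any pair of open sets $W_1\subseteq W_2$ and any $y\in W_1$,
\[
\mathrm{dist}(y,\partial W_2)\geq \mathrm{dist}(y,\partial W_1),
\]
since $\partial W_j\subseteq W_j^{\,c}$, $W_2^{\,c}\subseteq W_1^{\,c}$, and $\mathrm{dist}(y,\partial W_j)=\mathrm{dist}(y,W_j^{\,c})$ on the open set $W_j$. Applied to $V_i\subseteq V_n$ with $n\geq i$, this shows that the inner thickenings $(V_n)_{\lambda}$ only grow in $n$ as $\lambda$ decreases. Now I would set $U_0:=\emptyset$ and suppose inductively that a smooth bounded open $U_{n-1}$ with $\overline{U_{n-1}}\subseteq V_{n-1}\subseteq V_n$ has already been built. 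Compactness gives $d_n:=\mathrm{dist}(\overline{U_{n-1}},\partial V_n)>0$, and I would pick
\[
0<\lambda_n<\min\{d_n,\,1/n\}
\]
small enough that Lemma~\ref{lmbdry} applies to $V_n$, and then put $U_n:=(V_n)^{*}_{\lambda_n}$. By Lemma~\ref{lmbdry}, $U_n$ is smooth, bounded, and satisfies $(V_n)_{\lambda_n}\subseteq U_n\subseteq V_n$; the mollified level-set construction underlying the lemma in fact places $\overline{U_n}$ inside $V_n$ (shrinking $\lambda_n$ slightly if necessary). Since $\lambda_n<d_n$, we also have $\overline{U_{n-1}}\subseteq (V_n)_{\lambda_n}\subseteq U_n$, which gives the required monotonicity $U_{n-1}\subseteq U_n$.

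It remains to verify $\bigcup_n U_n=\bigcup_i B_{r_i}(x_i)$. The $\subseteq$ direction is immediate from $U_n\subseteq V_n$. For $\supseteq$, given $y\in B_{r_i}(x_i)$ I would set $\delta:=\mathrm{dist}(y,\partial V_i)>0$; the monotonicity observation yields $\mathrm{dist}(y,\partial V_n)\geq \delta$ for every $n\geq i$, and since $\lambda_n\leq 1/n\to 0$ we have $\lambda_n<\delta$ for all large $n$, whence $y\in (V_n)_{\lambda_n}\subseteq U_n$. The main technical obstacle I anticipate is that the finite unions $V_n$ need not be even Lipschitz (neighbouring balls may be tangent, producing cusps or pinch points on $\partial V_n$), so one cannot hope to smoothen $\partial V_n$ by a direct local construction. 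My plan handles this by outsourcing all of the smoothing to Lemma~\ref{lmbdry} applied to $V_n$ as a black box, and reducing the monotonicity and exhaustion of $\{U_n\}$ to the purely set-theoretic behaviour of the inner thickenings $(V_n)_{\lambda_n}$.
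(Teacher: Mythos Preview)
Your argument is correct and follows essentially the same approach as the paper: both apply Lemma~\ref{lmbdry} to the finite unions $V_n=\bigcup_{i=1}^n B_{r_i}(x_i)$ with parameters $\lambda_n\to 0$, and deduce exhaustion from the fact that the inner approximants come within $1/n$ of $\partial V_n$. Your write-up is in fact more complete than the paper's, which simply asserts the existence of such $U_n$ as ``clear'' and does not spell out the inductive choice of $\lambda_n<d_n$ needed to guarantee the \emph{increasing} property $U_{n-1}\subseteq U_n$; your use of the monotonicity of $\mathrm{dist}(\cdot,\partial V_n)$ in $n$ is also a clean way to verify exhaustion.
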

\begin{proof}[\textbf{Proof of Corollary \ref{cor3}}]
Existence of smooth and bounded family of open sets $\{ U_n \}_n$, such that for any $n$, $\overline{U}_n \subseteq \cup_{i=1}^n B_{r_i}(x_i)$, is clear. Moreover we can choose the family so that for any $x\in\partial(\cup_{i=1}^n B_{r_i}(x_i))$, $\mbox{dist}(x,\partial U_n)<\frac{1}{n}$. This ensures that $\cup_{n=1}^\infty U_n = \cup_{i=1}^\infty B_{r_i}(x_i)$.
\end{proof}

\begin{lemma}\label{lmcomplement}
Let $\Omega$, $\Omega_1$ be open sets in $\mathbb{R}^N$ such that $\Omega_1 \subseteq \Omega$. Suppose $\Omega$ and $\Omega_1$ are BBM-domains. If $\Omega_2:=\Omega \setminus \overline{\Omega}_1$ is an extension domain, then $\Omega_2$ is also a BBM-domain.
\end{lemma}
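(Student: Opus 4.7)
Since $\Omega_2$ is an extension domain by hypothesis, Lemma \ref{lmdenseenough} reduces the task to verifying the BBM-Property for each $f \in \mathcal{D}(\Omega_2)$, that is, $f = g|_{\Omega_2}$ with $g \in C_c^\infty(\mathbb{R}^N)$. The plan is then to exploit the assumed BBM property of the larger domain $\Omega$ and the inner piece $\Omega_1$ through an additivity identity for the Gagliardo seminorm, with the cross interaction controlled by Lemma \ref{lmdisjoint}.

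Since $\Omega_1$ and $\Omega_2 = \Omega \setminus \overline{\Omega}_1$ are disjoint open subsets whose union exhausts $\Omega$ up to the piece $\partial\Omega_1 \cap \Omega$ (which I will take to be $\mathcal{L}^N$-negligible; this is the situation in the downstream applications, e.g.\ Corollary \ref{cor3}, where $\Omega_1$ is arranged to be smooth), splitting the domain of integration and using the symmetry $y\leftrightarrow x$ yields
\[
[g]_{W^{s,p}(\Omega)} = [g]_{W^{s,p}(\Omega_1)} + [g]_{W^{s,p}(\Omega_2)} + 2\iint_{\Omega_1 \times \Omega_2}\frac{\lvert g(x)-g(y)\rvert^p}{\lvert x-y\rvert^{N+sp}}\,dx\,dy.
\]

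I then solve for $[g]_{W^{s,p}(\Omega_2)}$, multiply by $(1-s)$, and pass to $s \to 1^-$. The first two terms on the right converge to $\kappa[g]_{W^{1,p}(\Omega)}$ and $\kappa[g]_{W^{1,p}(\Omega_1)}$ respectively by the BBM property of $\Omega$ and $\Omega_1$, while the cross term vanishes by Lemma \ref{lmdisjoint} applied to the disjoint sets $\Omega_1, \Omega_2$ and the smooth compactly supported function $g$. Using $\int_\Omega \lvert\nabla g\rvert^p\,dx = \int_{\Omega_1} \lvert\nabla g\rvert^p\,dx + \int_{\Omega_2} \lvert\nabla g\rvert^p\,dx$ (again by negligibility of $\partial\Omega_1 \cap \Omega$), the combination $\kappa\bigl([g]_{W^{1,p}(\Omega)} - [g]_{W^{1,p}(\Omega_1)}\bigr)$ collapses to $\kappa[g]_{W^{1,p}(\Omega_2)}$, establishing the BBM-Property for $f = g|_{\Omega_2}$. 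Invoking Lemma \ref{lmdenseenough} once more concludes that $\Omega_2$ is a BBM-domain.

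The main technical obstacle is the bookkeeping at the interface $\partial\Omega_1 \cap \Omega$: both the partition identity and the additivity of the Dirichlet integral require it to have zero $\mathcal{L}^N$-measure. In the intended applications this is automatic since $\Omega_1$ is arranged to be smooth (via Lemma \ref{lmbdry} or Corollary \ref{smoothapprox}); in full generality one would either impose this as an additional hypothesis or control the residual integrals over $\partial\Omega_1 \cap \Omega$ uniformly in $s$ before passing to the limit.
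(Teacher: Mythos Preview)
Your proof is correct and follows essentially the same route as the paper: reduce to $f\in\mathcal{D}(\Omega_2)$ via Lemma~\ref{lmdenseenough}, split $[g]_{W^{s,p}(\Omega)}$ over $\Omega_1$, $\Omega_2$, and the cross term, kill the cross term with Lemma~\ref{lmdisjoint}, and subtract the BBM identities on $\Omega$ and $\Omega_1$. Your explicit remark about the negligibility of $\partial\Omega_1\cap\Omega$ is exactly what the paper sweeps under the phrase ``ignoring measure zero sets in the integration.''
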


\begin{proof}[\textbf{Proof of Lemma \ref{lmdenseenough}}]
Let $f \in W^{1,p}(\Omega)$ and $\varepsilon>0$ be fixed. By Lemma \ref{lmdensity} there exists $g \in \mathcal{D}(\Omega)$ such that
\begin{equation}\label{1}
\lVert g-f \rVert_{W^{1,p}(\Omega)} < \varepsilon.
\end{equation}
Therefore for $s$, sufficiently close to $1$, one has
\begin{equation}\label{2}
\left\lvert (1-s)[g]_{W^{s,p}(\Omega)}-\kappa[g]_{W^{1,p}(\Omega)} \right\rvert <\varepsilon.
\end{equation}
Using triangle inequality,
\begin{multline*}
\left\lvert (1-s)[f]_{W^{s,p}(\Omega)}-\kappa[f]_{W^{1,p}(\Omega)} \right\rvert \leq  (1-s)\left\lvert [f]_{W^{s,p}(\Omega)} - [g]_{W^{s,p}(\Omega)} \right\rvert \\
+ \left\lvert (1-s)[g]_{W^{s,p}(\Omega)}-\kappa[g]_{W^{1,p}(\Omega)} \right\rvert + \kappa\left\lvert [g]_{W^{1,p}(\Omega)}-[f]_{W^{1,p}(\Omega)}\right\rvert\\
\leq(1-s)[f-g]_{W^{s,p}(\Omega)} + \lvert (1-s)[g]_{W^{s,p}(\Omega)}-\kappa[g]_{W^{1,p}(\Omega)} \rvert+ \kappa[f-g]_{W^{1,p}(\Omega)}.
\end{multline*}
Using Lemma \ref{lmdom} we obtain,
\begin{multline*}
\left\lvert (1-s)[f]_{W^{s,p}(\Omega)}-\kappa[f]_{W^{1,p}(\Omega)} \right\rvert
 \leq  C\lVert f-g \rVert_{W^{1,p}(\Omega)}+\left\lvert (1-s)[g]_{W^{s,p}(\Omega)}-\kappa[g]_{W^{1,p}(\Omega)} \right\rvert.
\end{multline*}
Finally using \eqref{1} and \eqref{2} the lemma follows.
\end{proof}

\begin{proof}[\textbf{Proof of Lemma \ref{lmdisjoint}}]
Assume that $f$ is supported in $B_{R-1}(0)$ for some $R>1$.
Set $B_1=\Omega_1 \cap B_R(0)$, $U_1=\Omega_1 \setminus B_R(0)$, $B_2=\Omega_2 \cap B_R(0)$,  and  $U_2=\Omega_2 \setminus B_R(0)$. Then

\begin{multline*}
I :=\iint \limits_{\Omega_1 \times \Omega_2} \frac{\lvert f(x)-f(y) \rvert^p }{\rvert x- y \lvert^{N+sp}}dx dy=\iint \limits_{U_1 \times B_2} \frac{\lvert f(x)-f(y) \rvert^p }{\rvert x- y \lvert^{N+sp}}dx dy+\iint \limits_{B_1 \times U_2} \frac{\lvert f(x)-f(y) \rvert^p }{\rvert x- y \lvert^{N+sp}}dx dy\\
+\iint \limits_{B_1 \times B_2} \frac{\lvert f(x)-f(y) \rvert^p }{\rvert x- y \lvert^{N+sp}}dxdy  =I_1+I_2+I_3.
\end{multline*}
The calculation for $I_2$ will be  similar to that of $I_1$, hence we present the estimate for the term $I_1$ only. Using Fubini's theorem  
\begin{multline*}
I_1= \int \limits_{x \in B_2}\lvert f(x) \rvert^p \Big( \int \limits_{y \in U_1} \frac{dy}{\rvert x- y \lvert^{N+sp}} \Big) dx \leq \int \limits_{x \in B_{R-1}(0)}\lvert f(x) \rvert^p \Big( \int \limits_{y \in  B_{R}(0)^c} \frac{dy}{\rvert x- y \lvert^{N+sp}} \Big) dx\\
\leq  \lVert f \rVert^p_{L^\infty}C\int \limits_{x \in B_{R-1}(0)} \int \limits_{r=1}^\infty r^{-1-sp}dr dx = \frac{C}{s}.
\end{multline*}
Using  the Lipschitz property of $f$  and denoting by $d(y) := \mbox{dist}(y, \partial\Omega_1)$, we estimate the term $I_3$.
\begin{multline*}
I_3  \leq C \int \limits_{y\in B_1}  \Big( \int \limits_{d(y)<\lvert x-y \rvert <2R } \lvert x-y \rvert^{p-N-sp} dx \Big) dy = C \int \limits_{y\in B_1} \Big(\int \limits_{d(y)<\lvert x\rvert <2R} \lvert x\rvert^{p-N-sp} dx \Big)dy\\
= C \int \limits_{y\in B_1} \int \limits_{r=d(y)}^{2R} r^{p-sp-1} dr dy = \frac{C }{1-s} \int \limits_{y\in B_1} \Big[(2R)^{p-sp}-d(y)^{p-sp} \Big] dy.
\end{multline*}
By Dominated convergence theorem (DCT), with a large constant as dominating function, we obtain as $s \to 1,$
$$\int \limits_{y\in B_1} \Big[(2R)^{p-sp}-d(y)^{p-sp} \Big] dy \to 0. $$
Therefore, $\lim \limits_{s \to 1-} (1-s)I_i=0$ for $i=1,2,3$, which finishes the proof of the lemma.
\end{proof}

\begin{proof}[\textbf{Proof of Lemma \ref{thRN}}]
Let  $f \in C_c^\infty(\mathbb{R}^N)$ be such that support of $f$ lies in $B_{R}(0)$. Using Example \ref{smooth-bounded-bbm} we have
\begin{multline*}
\lim \limits_{s \to 1-}(1-s)[f]_{W^{s,p}(\mathbb{R}^N)}
=\lim \limits_{s \to 1-}(1-s) [f]_{W^{s,p}(B_R(0))}\\
+2 \lim \limits_{s \to 1-}(1-s) \iint \limits_{B_R(0) \times \mathbb{R}^N \setminus B_R(0)}\frac{\lvert f(x)-f(y) \rvert^p}{\lvert x-y \rvert^{N+sp}}dx dy\\
=\kappa[f]_{W^{1,p}(B_R(0))}
+2 \lim \limits_{s \to 1-}(1-s) \iint \limits_{B_R(0) \times \mathbb{R}^N \setminus B_R(0)}\frac{\lvert f(x)-f(y) \rvert^p}{\lvert x-y \rvert^{N+sp}} dx dy.
\end{multline*}
By Lemma \ref{lmdisjoint} the last limit is $0$ and the result follows using Lemma \ref{lmdenseenough}.
\end{proof}

\begin{proof}[\textbf{Proof of Lemma \ref{seq}}]
Let $\varepsilon >0$ be given. Let $f \in \mathcal{D}(\Omega)$, that is we assume $f \in C_c^\infty(\mathbb{R}^N)$ with support in $B_R(0)$ for some $R>0$. Then since, each $\Omega_n$ is a BBM-domain,
\begin{multline*}
\kappa[f]_{W^{1,p}(\Omega_n)}
=\lim \limits_{s \to 1-} (1-s) [f]_{W^{s,p}(\Omega_n)}= \lim \limits_{s \to 1-} (1-s) [f]_{W^{s,p}(\Omega)}\\+\lim \limits_{s \to 1-}2 (1-s) \iint \limits_{\Omega \times (\Omega_n \setminus \Omega)} \frac{\lvert f(x)-f(y) \rvert^p}{\lvert x-y \rvert^{N+sp}} dx dy
+\lim \limits_{s \to 1-} (1-s) [f]_{W^{s,p}(\Omega_n \setminus \Omega)}.
\end{multline*}
By  Lemma \ref{lmdisjoint} we can say that the second term  is  equal to $0$. Hence we get,
\begin{equation}\label{4}
\kappa[f]_{W^{1,p}(\Omega_n)}
= \lim \limits_{s \to 1-} (1-s) [f]_{W^{s,p}(\Omega)} +\lim \limits_{s \to 1-} (1-s) [f]_{W^{s,p}(\Omega_n \setminus \Omega)}.
\end{equation}
 Set $A_n:=\Omega_n \setminus \Omega$ and $E_n:=A_n \cap B_R(0)$. We claim that
\begin{equation*}
\lim\limits_{n \to \infty}\lim\limits_{s \to 1-}(1-s)[f]_{W^{s,p}(A_n)}=0.
\end{equation*}
We use Lemma \ref{lmdisjoint} and the fact that $f$ is $0$ in $A_n \setminus E_n$ to get
\begin{multline*}
\lim\limits_{s \to 1-} (1-s)[f]_{W^{s,p}(A_n)}
=\lim\limits_{s \to 1-} (1-s)\Big( [f]_{W^{s,p}(E_n)}+\\ 2\iint \limits_{(A_n \setminus E_n) \times E_n}\frac{\lvert f(x)-f(y) \rvert^p}{\lvert x-y \rvert^{N+sp}}  dxdy
+[f]_{W^{s,p}(A_n \setminus E_n)} \Big) =\lim\limits_{s \to 1-} (1-s)[f]_{W^{s,p}(E_n)}.
\end{multline*}
Further using that $f$ is a Lipschitz function, we obtain
\begin{multline*}
\lim\limits_{s \to 1-} (1-s)[f]_{W^{s,p}(A_n)} \leq C  \lim\limits_{s \to 1-} (1-s)\iint \limits_{E_n \times E_n}\lvert x-y \rvert^{p-N-sp}dxdy\\
\leq C  \lim\limits_{s \to 1-} (1-s)\iint \limits_{E_n \times B_{2R}(0)}\lvert h\rvert^{p-N-sp}dhdy= C \lim\limits_{s \to 1-} (1-s)\int \limits_{E_n}\int \limits_{r=0}^{2R}r^{-1+p-sp}dhdy\\
= C \lim\limits_{s \to 1-} (1-s)\int \limits_{E_n}\frac{(2R)^{p-sp}}{p-sp}dy=\lim\limits_{s \to 1-} C(2R)^{p-sp}\mathcal{L}^N(E_n)
=  C\mathcal{L}^N(E_n).
\end{multline*}
Now each $E_n$ is bounded and $\cap_n E_n$ is empty set, therefore, taking limit as $n \to \infty$, our claim follows.\smallskip

Now, applying DCT to \eqref{4}, as $n \to \infty$, we have
$$
\kappa[f]_{W^{1,p}(\Omega)}=\lim \limits_{s \to 1-} (1-s) [f]_{W^{s,p}(\Omega)}.
$$
Finally using Lemma \ref{lmdenseenough} the lemma follows.
\end{proof}

\begin{proof}[\textbf{Proof of Lemma \ref{lmbdry}}]
We denote, for a set $X\subseteq\mathbb{R}^N$ and $\varepsilon>0$, $N_\varepsilon(X):=\{ y\in\mathbb{R}^N \ | \ \mbox{dist}(y,X)<\varepsilon \}$. Choose $R>0$ such that $\overline{\Omega}_\frac{\lambda}{16} \subseteq B_R(0)$. Consider the continuous function $d: \overline{B}_R(0) \to \mathbb{R}$ defined by
$$
d(x)=
\begin{cases}

\mbox{dist}(x,\partial \Omega_\frac{\lambda}{2}), &x \in \overline{\Omega}_\frac{\lambda}{2}\\
-\mbox{dist}(x,\partial \Omega_\frac{\lambda}{2}), &x \in \overline{B}_R(0) \setminus \Omega_\frac{\lambda}{2}.
\end{cases}
$$

Fix $\varepsilon >0$ such that $N_\varepsilon(\partial \Omega_\frac{\lambda}{2})$ does not intersect with $\partial \Omega_\frac{\lambda}{4}$ and $\partial \Omega_\lambda$. Hence for any $x \in \partial \Omega_\frac{\lambda}{4} \cup \partial \Omega_\lambda$, $\lvert d(x) \rvert >\varepsilon$. By \textit{Stone-Weierstrass Theorem} (see \cite{rudin}) there is a bounded smooth function $g: \overline{B}_R(0) \to \mathbb{R}$ such that $\lVert d-g \rVert_\infty < \varepsilon$.
\smallskip

Take any $x$ from $g^{-1}(0)$. Then $\mbox{dist}(x,\partial \Omega_\frac{\lambda}{2})=\lvert d(x) \rvert= \lvert d(x)-g(x) \rvert< \varepsilon$. Hence $g^{-1}(x)$ is a subset of $N_\varepsilon (\partial \Omega_\frac{\lambda}{2})$. Also \textit{Sard's Theorem} (see \cite{guillemin}) implies that $0$ is a limit point of regular values of $g$ that is there exist regular values, arbitrarily small. Without loss of generality we can assume that $0$ is a regular point of $g$.
\smallskip

Define $\Omega_\lambda^*=g^{-1}(0,\infty)$. Clearly $\partial \Omega_\lambda^*$ is smooth. We claim that $\Omega_\lambda \subseteq \Omega_\lambda^* \subseteq \Omega_\frac{\lambda}{4} $. In fact for $x\in \Omega_\lambda$, $d(x)>\varepsilon$. So we have $0<d(x)-\varepsilon<g(x)$. So, $x\in\Omega_\lambda^*$.
Now let us choose $y\in \Omega_\lambda^*$. Then $g(y)>0$. So, $-\varepsilon<g(y)-\varepsilon<d(y)$. So, $y\in N_\varepsilon(X)\cup\Omega_\frac{\lambda}{2} \subseteq \Omega_\frac{\lambda}{4}$.
\end{proof}

\begin{proof}[\textbf{Proof of Lemma \ref{lmcomplement}}]
Let $f \in \mathcal{D}(\Omega_2)$, that is we assume $f \in C_c^\infty(\mathbb{R}^N)$ with support in $B_R(0)$ for some $R>0$. By Lemma \ref{lmdisjoint} and using that $\Omega$ and $\Omega_1$ are BBM-domains and ignoring measure zero sets in the integration,
\begin{multline*}
\kappa [f]_{W^{1,p}(\Omega)}=\lim \limits_{s \to 1-} (1-s)[f]_{W^{s,p}(\Omega)}=\lim \limits_{s \to 1-} (1-s)[f]_{W^{s,p}(\Omega_1)}
+\lim \limits_{s \to 1-} (1-s)[f]_{W^{s,p}(\Omega_2)}\\
+2\lim \limits_{s \to 1-} (1-s)\iint \limits_{\Omega_1 \times \Omega_2} \frac{\lvert f(x)-f(y) \rvert^p}{\lvert x-y \rvert^{N+sp}} dx dy=\kappa[f]_{W^{1,p}(\Omega_1)}+\lim \limits_{s \to 1-} (1-s)[f]_{W^{s,p}(\Omega_2)}.
\end{multline*}
This implies,
\begin{equation*}
\kappa [f]_{W^{1,p}(\Omega_2)}=\lim \limits_{s \to 1-} (1-s)[f]_{W^{s,p}(\Omega_2)}.
\end{equation*}
Therefore we have the result using Lemma \ref{lmdenseenough}.
\end{proof}

Finally we present the proofs of our main results.
\begin{proof}[\textbf{Proof of Theorem \ref{main}}]
We know that any open set in $\mathbb{R}^N$ is countable union of open balls. Let $\mathbb{R}^N \setminus \overline{\Omega}= \cup_{i=1}^\infty B_{r_i}(x_i)$.Then, by Corollary \ref{cor3}, there exist an increasing family of smooth and bounded open sets, $\{U_n\}_n$, such that for any $n$, $\overline{U}_n \subseteq \cup_{i=1}^n B_{r_i}(x_i)$ and $\cup_{n=1}^\infty U_n = \cup_{i=1}^\infty B_{r_i}(x_i)=\mathbb{R}^N \setminus \overline{\Omega}$. Consider $\Omega_n=\mathbb{R}^N \setminus \cup_{i=1}^n \overline{U}_i=\mathbb{R}^N \setminus \overline{U}_n$. By Example \ref{smooth-bounded-bbm}, each $U_n$ is a BBM-domain. Now, since $U_n$ has smooth bounded boundary, so does $\Omega_n:=\mathbb{R}^N\setminus\overline{U}_n$. This implies that each $\Omega_n$ is an extension domain and we can then apply Lemma \ref{lmcomplement} to conclude that each $\Omega_n$ is a BBM-domain. Observe that $\cap_{n=1}^\infty\Omega_n$ and $\overline{\Omega}$ differ atmost by a measure zero set. So, we can apply Lemma \ref{seq} to conclude that $\Omega$, being an extension domain, is a BBM-domain.
\end{proof}

\begin{proof}[\textbf{Proof of Theorem \ref{converse}}]
By Corollary \ref{smoothapprox} there exist a smooth and bounded sequence of domains $\Omega_n\subsetneq\Omega$ such that $\cup_n\Omega_n=\Omega$. First note that \eqref{converse-hypothesis} implies that
\begin{equation*}
\liminf\limits_{s\to1-}(1-s)[f]_{W^{s,p}(\Omega_n)}<\infty.
\end{equation*}
We can now apply Bourgain \textit{et al.} \cite{bbm} to conclude that for each $n\in\mathbb{N}$, $f\in W^{1,p}(\Omega_n)$, that is there exist $g_n\in (L^p(\Omega))^N$ such that $g_n=\nabla f{\big|}_{\Omega_n}$. Now observe that $g_n=g_{n+1}{\big|}_{\Omega_n}$. Hence the vector valued function $\nabla f:\mathbb{R}^N\to \mathbb{R}^N$ exists although we are not in a position to conclude whether $\nabla f\in (L^p(\Omega))^N$ or not. Moreover in this case
\begin{equation}\label{equation1}
\lim\limits_{s\to1-}(1-s)[f]_{W^{s,p}(\Omega_n)}=\kappa[f]_{W^{1,p}(\Omega_n)}.
\end{equation}

Now we know that the limit in the left hand side of \eqref{equation1} exists in the extended real line, we use it to get
\begin{multline}\label{equation3}
(1-s)[f]_{W^{s,p}(\Omega_n)}\leq(1-s)[f]_{W^{s,p}(\Omega)}\\
\noindent \implies\liminf\limits_{s\to1-}(1-s)[f]_{W^{s,p}(\Omega_n)}\leq\liminf\limits_{s\to1-}(1-s)[f]_{W^{s,p}(\Omega)}\\
\noindent \implies\lim\limits_{s\to1-}(1-s)[f]_{W^{s,p}(\Omega_n)}\leq\liminf\limits_{s\to1-}(1-s)[f]_{W^{s,p}(\Omega)}.
\end{multline}
In \eqref{equation1} we take limit as $n\to\infty$ and apply monotone convergence theorem in the right hand side to conclude
\begin{equation}\label{equation2}
\lim\limits_{n\to\infty}\lim\limits_{s\to1-}(1-s)[f]_{W^{s,p}(\Omega_n)}=\kappa[f]_{W^{1,p}(\Omega)}.
\end{equation}

Now we can apply \eqref{equation2}, \eqref{equation3} and \eqref{converse-hypothesis} to get
\begin{multline*}
\kappa[f]_{W^{1,p}(\Omega)}=\lim\limits_{n\to\infty}\lim\limits_{s\to1-}(1-s)[f]_{W^{s,p}(\Omega_n)}\leq \limsup\limits_{n\to\infty}\liminf\limits_{s\to1-}(1-s)[f]_{W^{s,p}(\Omega)}\\
=\liminf\limits_{s\to1-}(1-s)[f]_{W^{s,p}(\Omega)}<\infty.
\end{multline*}
Consequently $\nabla f\in \left(L^p(\Omega) \right)^N$ and hence $f\in W^{1,p}(\Omega)$.
The last statement now follows from Theorem \ref{main}.
\end{proof}
\begin{proof}[\textbf{Proof of Theorem \ref{p=1-converse}}]
As above let $\Omega_n\subsetneq\Omega$ be an increasing sequence of smooth and bounded domains such that $\cup_n\Omega_n=\Omega$. \eqref{p=1-converse-hypothesis} implies that
\begin{equation*}
\liminf\limits_{s\to1-}(1-s)[f]_{W^{s,1}(\Omega_n)}<\infty.
\end{equation*}
This along with \eqref{p=1-brezis} implies that $f\in BV(\Omega_n)$. Using \eqref{p=1-seminorm} we get
\begin{equation*}
\lim \limits_{s \to 1-}(1-s)[f]_{W^{s,1}(\Omega_n)}=\kappa[f]_{BV(\Omega_n)}.
\end{equation*}
We now use this with \eqref{p=1-converse-hypothesis} to get
\begin{multline*}
\kappa[f]_{BV(\Omega)}=\lim\limits_{n\to \infty} \kappa[f]_{BV(\Omega_n)}
=\lim\limits_{n\to \infty} \lim \limits_{s \to 1-}(1-s)[f]_{W^{s,1}(\Omega_n)}\\
\leq \liminf \limits_{s \to 1-}(1-s)[f]_{W^{s,1}(\Omega)}<\infty.
\end{multline*}
Hence the result follows.
\end{proof}

\bigskip

\textbf{Acknowledgement}  The work of the second author is supported by CSIR(09/092(0940)/2015-EMR-I).
Research work of third author is funded by Matrix grant  (MTR/2019/000585)   and Inspire grant  (IFA14-MA43) of Department of science and Technology (DST).  We are thankful to  P. H\"ast\"o, G. Leoni and  A. Mallick for sharing their knowledge with us on this topic.

\end{document}